\setlist{topsep=6pt, itemsep=0.4em, after=\vspace{2pt}}
\newtheorem{thm}{Theorem}
\newtheorem*{thm*}{Theorem}
\newtheorem{lem}{Lemma}
\newtheorem*{lem*}{Lemma}
\theoremstyle{definition}
\newtheorem{rem}{Remark}
\newcommand{\bC}{\mathbb{C}}
\newcommand{\bN}{\mathbb{N}}
\newcommand{\calW}{\mathcal{W}}
\newcommand{\capacity}{\operatorname{cap}}
\renewcommand{\Re}{\operatorname{Re}}
\renewcommand{\Im}{\operatorname{Im}}
\subjclass[2020]{30C10, 30E10, 41A10} 
\keywords{Chebyshev polynomials, Faber polynomials, Widom factors, corners and cusps}
\numberwithin{equation}{section}
\author{Erwin Mi\~{n}a-D\'{i}az \and Olof Rubin \and Aron Wennman}
\begin{document}

\title[Norms of Chebyshev and Faber polynomials]
{Norms of Chebyshev and Faber polynomials on curves with corners and cusps}

\begin{abstract} We  prove that the $n$th Chebyshev polynomial $T_{n}$ of a piecewise Dini-smooth Jordan curve $\Gamma$  satisfies 
\[
\lim_{n\to\infty}\frac{\|T_{n}\|_{\Gamma}}{\mathrm{cap}(\Gamma)^n}=1,
\]
where $\|\cdot\|_\Gamma$ is the supremum norm over $\Gamma$ and $\mathrm{cap}(\Gamma)$ its logarithmic capacity.
This extends earlier results for smooth curves to curves with corner singularities, including cusps.
The proof makes use of weighted Faber polynomials, which we analyze using 
a Fourier analytic representation of the standard Faber polynomials due to Pommerenke. 
We moreover obtain new asymptotic bounds for the norm of Faber polynomials 
which are sharp if, for instance, all corners
have exterior angle greater than  \(\pi\).
\end{abstract}
\maketitle

\section{Introduction}
\noindent Given a compact set \(K \subset \bC\) we let \(\|\cdot\|_K\) 
denote the corresponding supremum norm. If \(K\) contains infinitely many points,  there exists, for every \(n\in \bN\), 
a unique monic polynomial of degree \(n\) minimizing \(\|\cdot\|_K\)
called the \(n\)th \textit{Chebyshev polynomial} of \(K\), 
and we denote it by \(T_{n}\). The literature on this topic is extensive and we refer the reader to 
\cites{achieser56,andrievskii17,faber20, smirnov-lebedev68, christiansen-simon-zinchenko-I, christiansen-simon-zinchenko-III, 
christiansen-simon-zinchenko-IV, eichinger17, novello-schiefermayr-zinchenko21, 
totik-varga15, totik-yuditskii15, totik14, widom69} 
and the references therein for a background. A central question is to understand 
the asymptotic behavior of \(\|T_n\|_K\) as \( n \) tends to infinity and it turns 
out that this limiting behavior is intimately connected to the logarithmic 
capacity of the set \( K \), denoted by \( \capacity(K) \). Szeg\H{o}'s inequality states that if $K$ has positive logarithmic capacity, then
\begin{equation}
\calW_{n}(K) \coloneqq \frac{\|T_n\|_K}{\capacity(K)^n}\geq 1,
\label{eq:szego_inequality}
\end{equation}
where \(\calW_n\) is the so-called {\em Widom factor}. A complementary result, 
established in various forms by Faber \cite{faber20}, 
Fekete \cite{fekete23}, and Szeg\H{o} \cite{szego24}, asserts that
\begin{equation}
\lim_{n\rightarrow \infty}\calW_n(K)^{1/n} =1. 
\label{eq:ffs_theorem}
\end{equation}

Without prior restrictions, the Widom factors can exhibit any kind of 
sub-exponential growth \cite{goncharov-hatinoglu15}; however, 
for sets \( K \) subject to geometric constraints, 
it is often possible to place upper bounds on the Widom factors, 
and in some cases it is even possible to determine the precise asymptotic 
behavior of \(\calW_{n}(K)\) as \(n\rightarrow \infty\). 

The Faber polynomials, introduced in \cite{faber03}, have served as good trial polynomials to asymptotically saturate Szeg\H{o}'s 
inequality for sets with smooth boundaries. Throughout this text \(\Gamma\) denotes a Jordan curve 
and \(\Omega\) is the unbounded component of $\overline{\bC}\setminus \Gamma$. 
There is a canonical conformal map 
\[
\phi\colon \Omega \to \{ w \in\overline{\bC} : |w| > 1 \}
\] 
such that  \(\phi(\infty) = \infty\) and
\(
\phi(z)/z \rightarrow \capacity(\Gamma)^{-1}
\)
as \(z\rightarrow \infty\).
The \(n\)th Faber polynomial, denoted \(F_n\), is defined 
to be the polynomial part of the Laurent expansion of \(\phi(z)^n\) at 
\(\infty\), and so it  has leading term \(\capacity(\Gamma)^{-n}z^n\). 
In view of the extremal property of \(T_n\) we thus have
\begin{equation}
\calW_{n}(\Gamma) \leq \|F_n\|_\Gamma.
\label{eq:widom_faber_inequality}
\end{equation}
Faber proved in \cite{faber20} that if \(\Gamma\) is an analytic Jordan curve,
then the Faber polynomials are asymptotically minimal in the 
sense that \(\|F_n\|_\Gamma\rightarrow 1\) as \(n\rightarrow \infty\), and therefore
\begin{equation}
\lim_{n\rightarrow \infty}\calW_{n}(\Gamma) = 1.
\label{eq:asymptotic_saturation}
\end{equation}
Hence, if $\Gamma$ is an analytic Jordan curve, then the Faber polynomials 
are close to  being norm-minimal, and the Chebyshev polynomials asymptotically 
saturate Szeg\H{o}'s lower bound. Faber's result was subsequently
extended by Suetin to \(C^{1,\alpha}\)-smooth curves \cites{suetin64,suetin84}.

The question whether \eqref{eq:asymptotic_saturation} still holds for curves with corners has remained open. It appears that the main challenge is to find suitable trial polynomials when the Faber polynomials fail to be asymptotically minimal. Indeed, if \(\Gamma\) is assumed to be piecewise Dini-smooth with largest exterior angle \( \pi\lambda \), a combination of  \cite{pritsker99}*{Theorem~1.1} and \cite{pritsker02}*{Theorem 2.1} shows that
\begin{equation}
	\lambda\leq  \liminf_{n\rightarrow \infty}
	\|F_n\|_\Gamma\leq \limsup_{n\rightarrow \infty}\|F_n\|_\Gamma\leq 2.
	\label{eq:pritsker_bounds}
\end{equation}
Thus, at least when $\lambda>1$, we know that $ \liminf_{n\rightarrow \infty}
\|F_n\|_\Gamma>1$.
 
The second author and L. H\"{u}bner \cite{hubner-rubin25} recently carried out numerical experiments which 
suggest that the limit \eqref{eq:asymptotic_saturation} remains true in the presence of finitely many corners, including the case of cusps. Our main goal is to prove that this is indeed the case.

\begin{thm}
	If \(\Gamma\) is a piecewise Dini-smooth Jordan curve, then
	\[\lim_{n\rightarrow \infty}\calW_{n}(\Gamma) = 1.\]
	\label{thm:main}
\end{thm}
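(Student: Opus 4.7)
The approach is to construct, for each $n$, a trial polynomial of degree $n$ with leading coefficient $\capacity(\Gamma)^{-n}$ whose supremum norm on $\Gamma$ tends to $1$; combining this with the extremal property of $T_n$ and Szeg\H o's inequality \eqref{eq:szego_inequality} forces $\mathcal W_n(\Gamma) \to 1$. For smooth curves the ordinary Faber polynomials suffice, but in view of \eqref{eq:pritsker_bounds} they fail when $\Gamma$ has corners of exterior angle $>\pi$, so we must modify them. As the abstract foreshadows, we use \emph{weighted Faber polynomials}: for $g$ holomorphic in $\Omega$ with $g(\infty)=1$, let $F_n^g$ denote the polynomial part of the Laurent expansion at $\infty$ of $g(z)\phi(z)^n$. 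Such a polynomial has degree $n$ and leading coefficient $\capacity(\Gamma)^{-n}$, so $\mathcal W_n(\Gamma) \le \|F_n^g\|_\Gamma$ for every admissible $g$.

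Pulling back to the unit circle via $\psi=\phi^{-1}$, the Pommerenke-style representation gives
\[
F_n^g(\psi(e^{i\theta})) = g(\psi(e^{i\theta}))\, e^{in\theta} - R_n^g(\theta),
\]
where $R_n^g$ is the boundary value of $(g\phi^n - F_n^g)\circ\psi$, a function holomorphic in $\{|w|>1\}$ with a zero at $\infty$; consequently $R_n^g$ has a Fourier series supported on strictly negative modes. This yields $\|F_n^g\|_\Gamma \le \|g\|_\Gamma + \|R_n^g\|_\infty$, and the problem reduces to choosing $g = g_n$ so that $\|g_n\|_\Gamma \to 1$ and $\|R_n^{g_n}\|_\infty \to 0$. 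Motivated by Warschawski's asymptotic $\phi(z)\sim c(z-z_k)^{1/\lambda_k}$ near each corner vertex $z_k$ of exterior angle $\pi\lambda_k$, a natural candidate is
\[
g_n(z) = \prod_{k} \left(\frac{\phi(z)-\phi(z_k)}{\phi(z)}\right)^{\alpha_n},
\]
with suitable single-valued branches in $\Omega$ and exponents $\alpha_n \to 0$ satisfying $n\alpha_n \to \infty$. Each factor is holomorphic in $\Omega$, equals $1$ at $\infty$, vanishes at the corresponding corner, and has modulus bounded by $2^{\alpha_n}$ on $\Gamma$, giving $\|g_n\|_\Gamma \to 1$.

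The main obstacle is the uniform estimate $\|R_n^{g_n}\|_\infty \to 0$. The Fourier coefficients of $R_n^{g_n}$ arise from those of $g_n\circ\psi$, which decay only slowly because of the corner-induced singularities of $\psi$, and one must show that the twofold smoothing---the vanishing of $g_n$ at the corner images together with the survival of only strictly negative modes after shifting by $n$---eventually dominates the singularity, uniformly in $\theta$. I expect this to require a sharpened Warschawski-type expansion of $\psi$ at each corner vertex combined with an $L^\infty$ bound for the Riesz-type projection applied to $g_n\circ\psi$, with the exponent $\alpha_n$ tuned to balance the decay of $\|R_n^{g_n}\|_\infty$ against the rate of $\|g_n\|_\Gamma \to 1$. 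Once this balance is struck, \eqref{eq:szego_inequality} closes the argument.
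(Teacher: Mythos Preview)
Your overall strategy---weighted Faber polynomials with weights that vanish at the corner images and tend to $1$ elsewhere---is exactly the paper's, and your candidate weight is essentially the one the authors use (they write it as $\prod_k(1-r_mw_k/\phi(z))^{1/m}$ and then approximate by a polynomial $P_m$ in $1/\phi$). The gap is at the sentence beginning ``I expect this to require\ldots'': this is the entire analytic content of the theorem, and the tools you name are not the right ones. An $L^\infty$ bound for a Riesz-type projection is unavailable in general (the projection is unbounded on $L^\infty$), and Warschawski-type expansions of $\psi$ at the corners play no role in the paper's argument. More structurally, the triangle inequality $\|F_n^g\|_\Gamma\le\|g\|_\Gamma+\|R_n^g\|_\infty$ discards the correlation that actually makes the argument work: the Faber error is large precisely near the corners, where $g$ is small. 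If you chase through Pommerenke's formula at a corner $z_k$ with a fixed weight $P_m$, you find $R_n^{P_m}(e^{i\theta_k})\sim(1-\lambda_k)P_m(e^{i\theta_k})e^{in\theta_k}$, so forcing $\|R_n\|_\infty\to 0$ separately would require the weight to vanish at the corners faster than the paper needs---and you would still have to control $R_n$ \emph{uniformly} as $\theta$ approaches a corner, which is exactly the hard part.

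The paper avoids the decomposition entirely. It (i) replaces the weight by a fixed polynomial $P_m(w)=1+\sum_{j\le d_m}a_jw^{-j}$ in $1/\phi$, so that the weighted Faber polynomial is the finite sum $Q_{n,m}=\sum_{j=0}^{d_m}a_jF_{n-j}$; (ii) applies Pommerenke's representation $F_k(\psi(e^{i\theta}))=\pi^{-1}\int e^{ikt}\,dv_\theta(t)$ termwise to obtain
\[
Q_{n,m}(\psi(e^{i\theta}))=\frac{1}{\pi}\int_0^{2\pi} e^{int}P_m(e^{it})\,dv_\theta(t);
\]
and (iii) bounds this integral directly by splitting into $|t-\theta|<\delta$ and $|t-\theta|\ge\delta$. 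The far piece tends to $0$ uniformly in $\theta$ by a Riemann--Lebesgue-type lemma for the secant-variation measures (Lemma~\ref{lem:pritsker_modification}(i)). The near piece is handled by pairing total-variation bounds for $v_\theta$---at most $\pi(1+\epsilon)$ when no corner lies nearby and at most $\pi(2+\epsilon)$ when one does---against the size of $P_m$, which is $\le 2^{l/m}+1/m$ globally but $\le\tfrac12+1/m$ in a $\delta_m$-neighbourhood of each corner. This pairing yields $\limsup_n\|Q_{n,m}\|_\Gamma\le 2^{l/m}+2/m$; one then fixes $m$, sends $n\to\infty$, and only afterwards lets $m\to\infty$. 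No diagonal $\alpha_n$, no conformal asymptotics, and no control of the error term $R_n$ in isolation are needed.
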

\vspace{-0.5cm}
In \cite{totik-varga15}*{Theorem 2.1}, Totik and Varga consider 
Chebyshev polynomials for a class of sets which includes piecewise smooth Jordan curves
as a special case. However,  their results focus on bounds for the Widom factors
and do not reveal the asymptotic behavior of \(\calW_{n}(\Gamma)\) as 
\(n\rightarrow \infty\), even when specialized to our setting. 	

We are also interested in obtaining a better understanding of the norm of  
Faber polynomials. Our next result provides a sharpening of the upper bound in \eqref{eq:pritsker_bounds}. We clarify that our usage of the word ``corner", say of exterior angle $\lambda\pi$, includes cusps ($\lambda=0,2$), as well as 
``flat" corners ($\lambda=1$). 
\begin{thm}
\label{thm:faber_upper_bnd}
Let \(\Gamma\) be a piecewise Dini-smooth curve with \(l\geq 1\) 
corners \(z_1,\ldots, z_l\) and associated exterior angles \(\lambda_1\pi,\dotsc,\lambda_l\pi \). With the quantities 
	\[
	\Lambda_k\coloneqq \max\{\lambda_k,2-\lambda_k\}, \qquad k=1,2,\ldots,l,
	\]
we have 
	\begin{align}\label{eq:faber_global_upper_bnd}
		\limsup_{n\to\infty} \|F_n\|_{\Gamma}\leq \max_{1\leq k\leq l}\Lambda_k.
	\end{align}
Additionally,
	\begin{align}\label{eq:pointwise_faber}
		\lim_{n\to\infty} \phi(z)^{-n}F_n(z)=\begin{cases}1,&\ z\in \Gamma\setminus \{z_1,\ldots,z_l\},\\
			\lambda_k,&\ z=z_k, \ k=1,\ldots,l,
		\end{cases}
	\end{align}
	the limit in \eqref{eq:pointwise_faber} holding uniformly on compact subsets of \(\Gamma\setminus \{z_1,\ldots,z_l\}\).
\end{thm}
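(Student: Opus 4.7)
The plan is to base the analysis on a Pommerenke-type integral representation for the Faber polynomials. From the generating identity $\psi'(w)/(\psi(w)-z) = \sum_{n\ge 0} F_n(z) w^{-n-1}$ (with $\psi = \phi^{-1}$) and the residue theorem, one obtains
\[
F_n(z) - \phi(z)^n = \frac{1}{2\pi i}\oint_\Gamma \frac{\phi(\zeta)^n}{\zeta - z}\,d\zeta, \quad z \in \Omega,
\]
where $\phi(\zeta)^n$ stands for the outer boundary value along $\Gamma$. Letting $z\to z_0 \in \Gamma$ from $\Omega$ and invoking the Sokhotski--Plemelj formula in the variant valid at corners gives a boundary formula expressing $F_n(z_0)$ as a jump contribution of the form $c(z_0)\phi(z_0)^n$ (with $c=1/2$ at smooth points, modified at a corner by the exterior angle) plus a principal-value Cauchy integral along $\Gamma$.

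To deduce the pointwise limits in \eqref{eq:pointwise_faber}, one changes variables $\zeta = \psi(e^{i\theta})$ in the principal value and rewrites it as a Fourier integral of $e^{in\theta}$ against a kernel with a Cauchy-type singularity at $\theta_0$. At a Dini-smooth $z_0$ the regular part of this kernel is Dini continuous, so a Riemann--Lebesgue type estimate combined with the standard Hilbert transform computation on the circle produces $\phi(z_0)^{-n}F_n(z_0) \to 1$, uniformly on compact subsets of $\Gamma \setminus \{z_1,\dots,z_l\}$. At a corner $z_k = \psi(w_k)$, Lindel\"of's theorem yields the local expansion $\psi(w) - z_k \sim c_k(w-w_k)^{\lambda_k}$, which alters both the Plemelj jump coefficient and the local structure of the kernel; combining the corner-adjusted jump with the modified local contribution of the PV produces exactly the pointwise limit $\lambda_k$ asserted in \eqref{eq:pointwise_faber}.

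For the global estimate \eqref{eq:faber_global_upper_bnd} one must track $\phi(z)^{-n}F_n(z)$ uniformly as $z$ varies along $\Gamma$, especially near the corners. Away from the corners the previous argument extends to a uniform $o(1)$ bound; near a corner $z_k$ one localizes the integral to a small arc around $w_k$ on the unit circle and approximates $\psi$ by the leading power $(w-w_k)^{\lambda_k}$. This reduces the problem to a one-parameter family of model integrals indexed by the position of $z$ on either side of $z_k$ along $\Gamma$, and the extremal values of these model integrals are $\lambda_k$ on the side where $\Omega$ locally subtends the exterior angle $\lambda_k\pi$, and $2-\lambda_k$ on the opposite side; taking the maximum yields $\Lambda_k$ at each corner and hence $\max_k \Lambda_k$ globally. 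I expect the main technical obstacle to lie precisely in this uniform step: the kernel's singularity shifts continuously with $z$ near a corner, and one must produce principal-value estimates that are uniform in $z$ in order to reconcile the pointwise corner limit $\lambda_k$ with the larger transient value $2-\lambda_k$ that the sup norm picks up on the opposite side, which is relevant in particular for concave corners and cusps where $\lambda_k < 1$.
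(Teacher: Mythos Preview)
Your route via the boundary Cauchy integral and a corner-adjusted Sokhotski--Plemelj formula is a reasonable alternative, and for the pointwise statement \eqref{eq:pointwise_faber} a program of this kind can be carried out. The paper proceeds differently: it uses Pommerenke's representation
\[
F_n(\psi(e^{i\theta}))=\frac{1}{\pi}\int e^{int}\,dv_\theta(t),\qquad v_\theta(t)=\arg\bigl(\psi(e^{it})-\psi(e^{i\theta})\bigr),
\]
which arises from your Cauchy integral after an integration by parts. The payoff is that $|F_n(\psi(e^{i\theta}))|$ is then controlled entirely by the total variation of $v_\theta$ on short intervals, a purely geometric quantity of $\Gamma$ that does not require any pointwise asymptotics for $\psi$ near the corners. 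The pointwise limit follows because $dv_\theta$ carries a point mass $\lambda(\theta)\pi\,\delta_\theta$, the continuous part has small variation near $\theta$ (Lemma~\ref{lem:dini_arc_local_variation}), and the far part dies by a uniform Riemann--Lebesgue argument.

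For the uniform bound \eqref{eq:faber_global_upper_bnd} your plan has a genuine gap. First, for piecewise Dini-smooth (rather than H\"older) curves the Lindel\"of-type expansion $\psi(w)-z_k\sim c_k(w-w_k)^{\lambda_k}$ comes without a quantitative remainder, so reducing to a ``model integral'' \emph{uniformly} in the moving base point $z$ is precisely the hard step, and nothing you wrote justifies it. Second, your account of how $\Lambda_k$ arises is not correct: the bound is not $\lambda_k$ on one side of the corner along $\Gamma$ and $2-\lambda_k$ on the other. In the paper's analysis, when $\theta$ is close to but distinct from $\theta_k$, the point mass of $dv_\theta$ at $\theta$ contributes $1$ (since $\psi(e^{i\theta})$ is a smooth point of $\Gamma$), while the variation of $v_\theta$ on the short piece of the interval lying \emph{beyond} the corner contributes at most $1-\mu_k$, where $\mu_k\pi=\min(\lambda_k,2-\lambda_k)\pi$ is the smaller of the two angles at $z_k$; this is Lemma~\ref{lem:variation_upper_bnd}. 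The resulting bound $1+(1-\mu_k)=\Lambda_k$ is the same on either side of the corner. Producing the analogue of this $1-\mu_k$ estimate is exactly what your model-integral computation would have to deliver, and that computation is neither formulated precisely nor carried out in your proposal.
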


If \(\Gamma\) bounds a convex domain, then it follows from \cite{pommerenke64}*{Satz 3} that 
\[\limsup_{n\rightarrow \infty}\|F_n\|_\Gamma = \max_{1\leq k \leq l}\lambda_k.\]
However, in the non-convex setting the only asymptotic upper bound we are aware of 
is given by \eqref{eq:pritsker_bounds}. Under the stronger Alper condition imposed on \(\Gamma\), it is shown in \cite{pritsker99} that \eqref{eq:pointwise_faber} holds in the pointwise sense. For piecewise analytic $\Gamma$, more precise asymptotics for $F_n$ away from the corners were obtained in \cite{minadiaz09}.

 Note that \eqref{eq:pointwise_faber} implies that there is equality in \eqref{eq:faber_global_upper_bnd} whenever there is $j\in\{1,\ldots,l\}$ such that 
\begin{align}\label{fabercond}
	\max_{1\leq k\leq l}\Lambda_k=\lambda_j.
\end{align}
When this happens,  we have $\limsup_{n\to\infty} \|F_n\|_{\Gamma}>1$. The exact value of this $\limsup$ when \eqref{fabercond} does not hold is unknown to us.
%, but numerical experiments indicate that 
In Figure~\ref{fig:faber_inward} we illustrate the behavior of a Faber polynomial
associated with the circular lune \(C_{1/2}\), which has exterior angle \(\pi/2\).
In this case one has \(\Lambda_1 = \Lambda_2 = 3/2\) and \(\lambda_1 = \lambda_2 = 1/2\).
On the basis of this and similar plots for varying \(n\), we are led to believe that
\[
  1 <\limsup_{n\to\infty} \|F_n\|_{C_{1/2}} <3/2.
\]

\begin{figure}[h!]
	\centering
	\includegraphics[width=\textwidth]{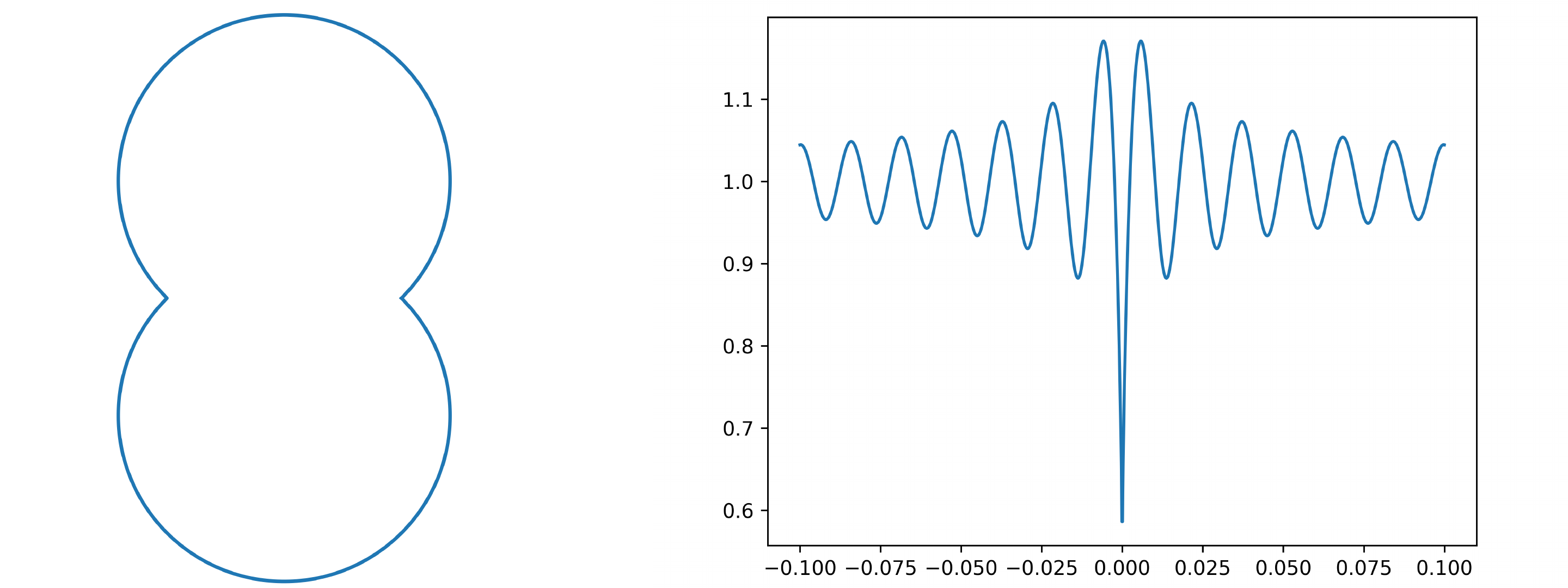}
	\caption{On the left -- the circular lune \(C_{1/2} = \{\psi(w): |w| = 1\}\) where \(\psi(w) = \frac{w+\sqrt{w^2-1}}{2}\). On the right -- the plot of the absolute value of the corresponding Faber polynomial \(t\mapsto |F_{401}(\psi(e^{2\pi it}))|\).}
	\label{fig:faber_inward}
\end{figure}
\begin{rem}
If $\Gamma$ is a Dini-smooth Jordan curve, a simplified version of the proof of Theorem \ref{thm:faber_upper_bnd} can be used to show that 
the uniform behavior in \eqref{eq:pointwise_faber} 
extends to the entire curve \(\Gamma\). Alternatively, this can also be deduced directly as a corollary of Theorem \ref{thm:faber_upper_bnd} since ``flat" corners are technically allowed. It follows that if \(\Gamma\) is a Dini-smooth Jordan curve, then 
	\[\lim_{n\rightarrow \infty}\|F_n\|_\Gamma = 1.\]
This extends \cite{suetin64}*{Theorem~2} and its refined version 
\cite{suetin84}*{Theorem 2, Section IV} for curves of class \(C^{1+\alpha}\), 
showing that the Faber polynomials are asymptotically minimal on Dini-smooth Jordan curves as well.  
\end{rem}

The asymptotic behavior in \eqref{eq:pointwise_faber} suggests how the Faber
polynomials can be modified to produce trial polynomials better suited for proving
Theorem \ref{thm:main}.
While the Faber polynomials have uniformly small norms on 
$\Gamma \setminus \{z_1,\dots,z_l\}$, they may exhibit larger absolute values near the corners.
This is visualized in Figure~\ref{fig:sidebyside} where we compare the Chebyshev and Faber polynomials for the 
deltoid curve $\Gamma \coloneqq \{ w + w^2/2 : |w| = 1 \}$.

\begin{figure}[hbtp]
\captionsetup{width=\linewidth}
\centering
\begin{subfigure}{0.49\textwidth}
\includegraphics[width=\linewidth]{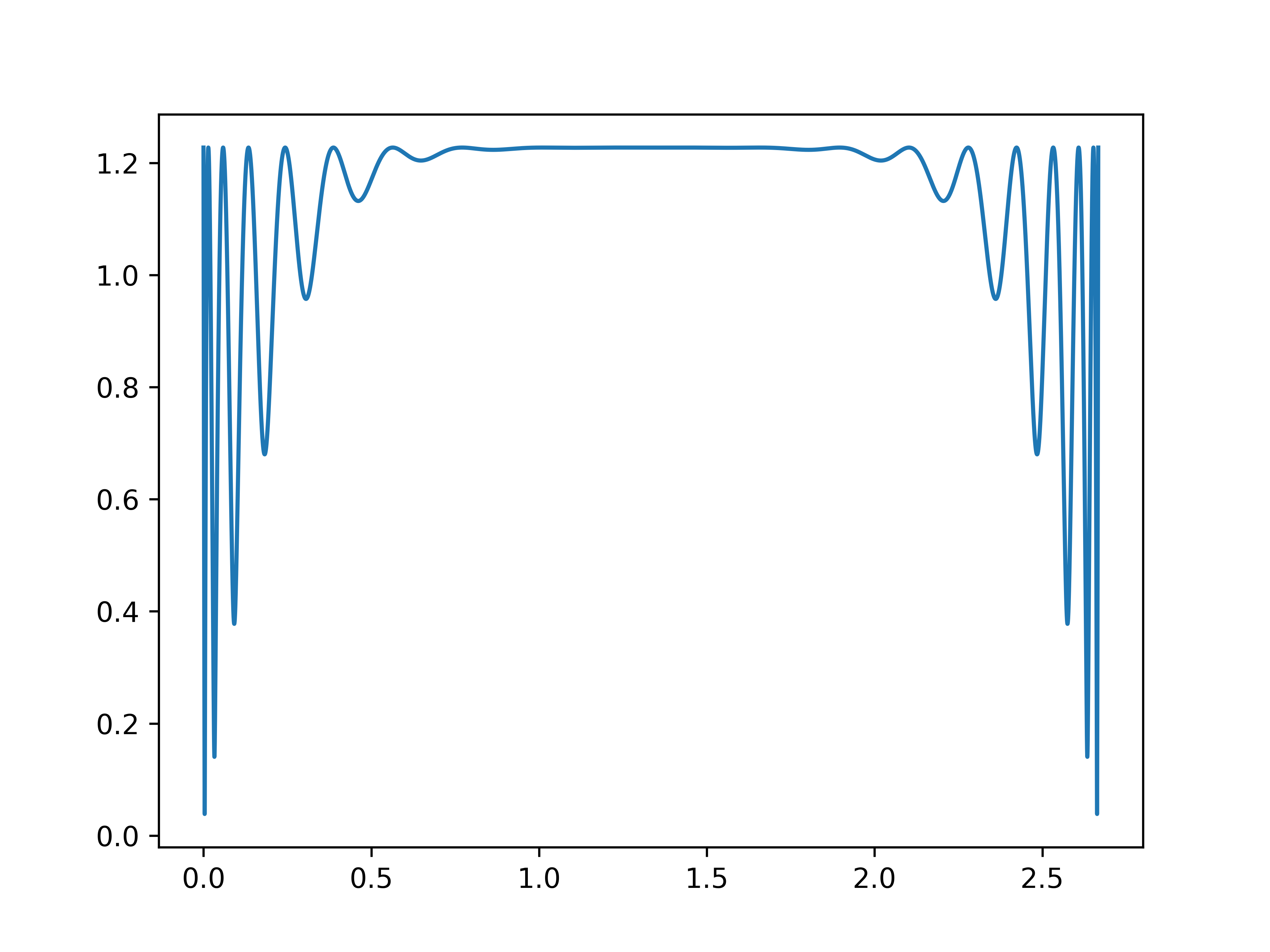}
\caption{\(t\mapsto |T_{30}(\gamma(t))|\)}
\label{fig:cheb}
\end{subfigure}
\hfill
\begin{subfigure}{0.49\textwidth}
\includegraphics[width=\linewidth]{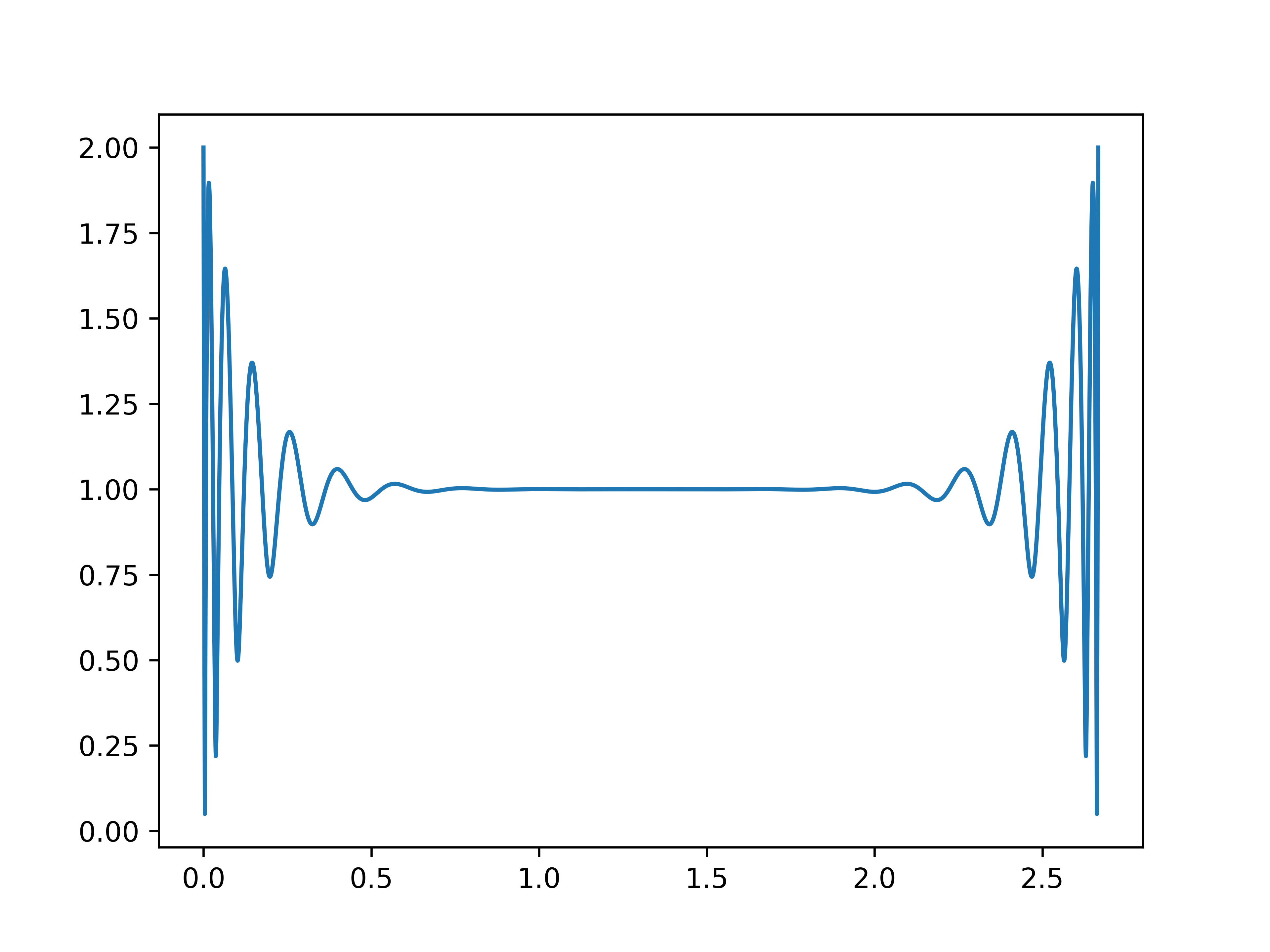}
\caption{\(t\mapsto |F_{30}(\gamma(t))|\)}
\label{fig:faber}
\end{subfigure}
  
\caption{Absolute values of the Chebyshev and Faber polynomials 
on the part of the deltoid between two cusps, parametrized by arc-length. 
The Chebyshev polynomials are computed using Tang's algorithm 
\cites{tang88,hubner-rubin25}.}
\label{fig:sidebyside}
\end{figure}

To construct polynomials with smaller norm, we look for a way to suppress the values of \(|F_n(z)|\) nearby the 
corner points---where $|F_n(z)|$ asymptotically deviates from 1---while allowing them to increase slightly in the mid-arc regions, where \(|F_n(z)| \to 1\) uniformly. This will be carried out by using weighted Faber polynomials \(Q_{n,m}\), defined as 
the polynomial part of expressions of the form \(G_m(z)\phi(z)^n\), where  \(G_m\)
is analytic in \(\Omega\) and \(G_m(\infty) = 1\). The polynomial \(Q_{n,m}\) is 
of degree \(n\) and leading coefficient \(\capacity(\Gamma)^{-n}\), and therefore
\begin{equation}
\calW_{n}(\Gamma)\leq \|Q_{n,m}\|_\Gamma.
\label{eq:faber_polynomials_widom_factors}
\end{equation}
We will see that it is possible to pick the weight functions \(G_m\) so as to have  
\begin{equation}\label{eq:faber_asymptotic_saturation}
\limsup_{n\rightarrow \infty}\|Q_{n,m}\|_\Gamma = 1+o(m),\qquad  m\to\infty.
\end{equation}
For instance, we could choose 
\[
G_m(z)=\prod_{k=1}^l\left(1-\frac{\phi(z_k)}{\phi(z)}\right)^{1/m},
\]
or more conveniently, as we do, a good approximation to this $G_m$ by a polynomial in $1/\phi(z)$.

\section{Constructing asymptotically minimal weighted Faber polynomials}
\label{sec:constructing_weighted}
\noindent In this section we construct a sequence of weighted Faber polynomials 
satisfying \eqref{eq:faber_asymptotic_saturation}, thus proving Theorem \ref{thm:main}. 
We first recall the appropriate regularity notions for the curve \( \Gamma \), and then 
state a useful Fourier analytic representation of the Faber polynomials initially 
proven by Pommerenke \cites{pommerenke64, pommerenke65}. This representation was 
further utilized by Gaier \cite{gaier99} and Pritsker \cites{pritsker02},
and we will use several ideas from these works. In order to make them applicable to our setting, 
we need to take extra care to ensure that certain error terms are uniform, which is only  implicit in the
above-mentioned works.

An arc with parametrization $\gamma(s)$, $s\in [0,1]$ is said to be Dini-smooth provided that $\gamma'(s)$ is continuous, never vanishes, and its modulus of continuity 
\[
\omega(t):=\max_{|s_1-s_2|\leq t}|\gamma'(s_1)-\gamma'(s_2)|,\qquad t\geq 0 
\]
satisfies the integrability condition 
\begin{align}\label{modulus-integrability}
	\int_0^1\frac{\omega(t)}{t}dt<\infty.
\end{align}
Note that \eqref{modulus-integrability} is weaker than assuming that
$\gamma'(s)$ satisfies a H\"{o}lder condition \[|\gamma'(s_1)-\gamma'(s_2)|\leq C|s_1-s_2|^\beta, \qquad 0<\beta\leq 1.
\]

A \emph{piecewise} Dini-smooth Jordan curve is a Jordan curve which can be decomposed into 
a union of finitely many Dini-smooth arcs.

\begin{figure}[t!]
	\captionsetup{width=.9\linewidth}
	\centering
	\includegraphics[width = 0.5\textwidth]{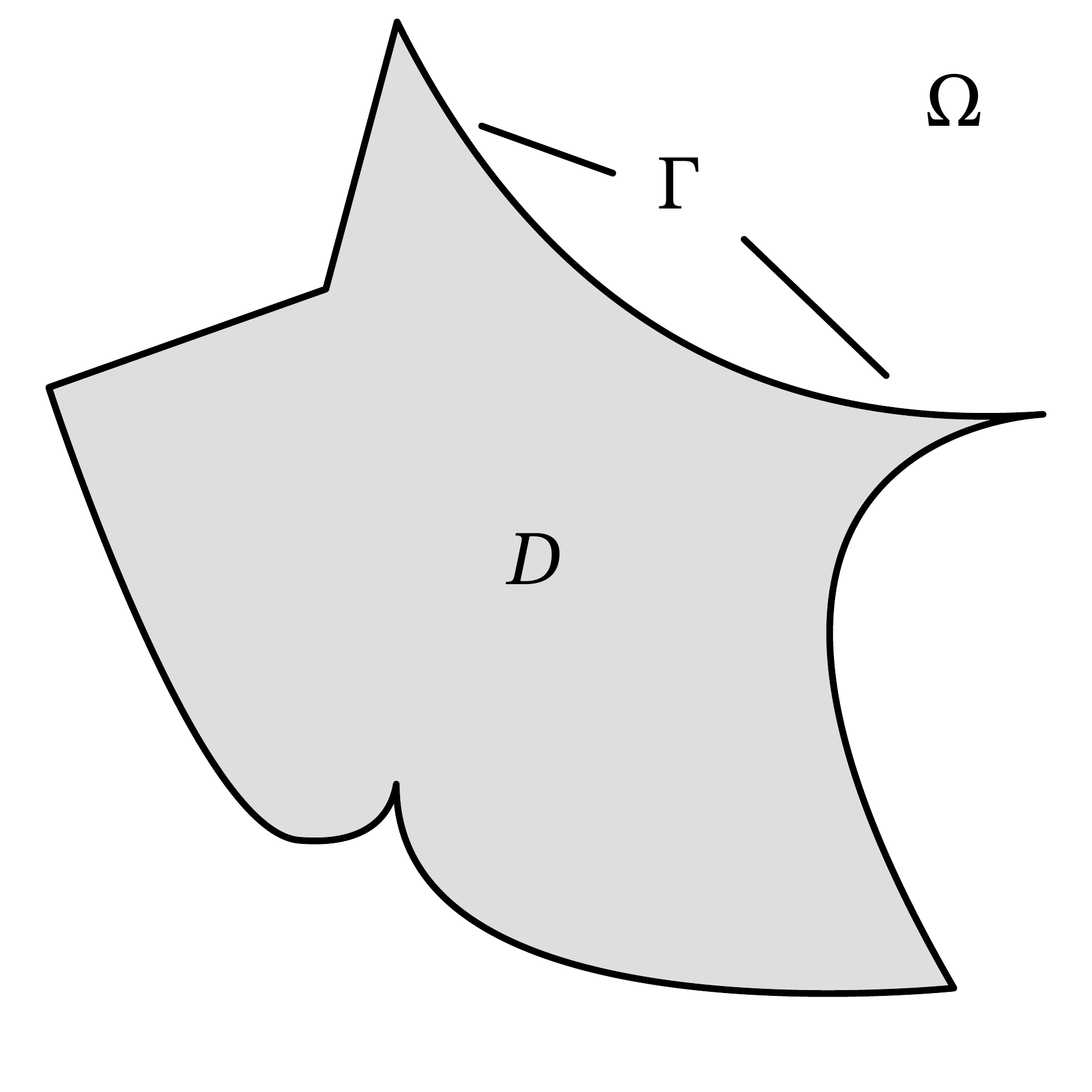}
%	\begin{tikzpicture}[scale=3]
%		\definecolor{mplblue}{rgb}{0.243,0.467,0.690}
%		
%		% Define corner points
%		\coordinate (z1) at (0.9,0);
%		\coordinate (z2) at (0.9,1.1);
%		\coordinate (z3) at (-1.2,0.5);
%		\coordinate (z4) at (-0.1,0.1);
%		\coordinate (z5) at (-0.5,-0.2);
%		
%		% Draw piecewise analytic Jordan curve in matplotlib blue
%		\draw[ultra thick, color=mplblue]
%		(z1)..controls (0.4,0.9)..(z2)..controls(-0.6,0.5)..(z3)..controls(-0.3,0.1)..(z4)..controls(-0.3,0.1)..(z5)..controls(1.4,-0.3)..(z1);
%		\node[above] at (0.2,0.2) {\(\Gamma\)};
%		\node[above] at (1.2,0.4) {\(\Omega\)};
%	\end{tikzpicture}

%	\begin{tikzpicture}[scale=3]
%		\definecolor{mplblue}{rgb}{0.243,0.467,0.690}
%		
%		% Define corner points
%		\coordinate (z1) at (0.8,0);
%		\coordinate (z2) at (0.5,0.8);
%		\coordinate (z3) at (-0.9,0.5);
%		\coordinate (z4) at (-0.5,0);
%		\coordinate (z5) at (-0.5,0);
%		\coordinate (z6) at (-0.3,-0.6);
%		\coordinate (z7) at (0.7,-0.6);
%		
%		% Draw piecewise analytic Jordan curve in matplotlib blue
%		\draw[ultra thick, color=mplblue]
%		(z1) .. controls(1,0.)..(z2)
%		.. controls (-0.9,0) .. (z4)
%		-- (z5)
%		.. controls (-0.8,0) .. (z6) .. controls(-0.35,-0.5) .. (z7)
%		.. controls (1.,-0.2) .. (z1);
%		\node[above] at (0.2,0.2) {\(\Gamma\)};
%		\node[above] at (1.2,0.4) {\(\Omega\)};
%	\end{tikzpicture}
	\caption{A piecewise smooth Jordan curve \(\Gamma\) with six corners, one of them an outer cusp, and one an inner cusp.}
	\label{fig:piecewise-v2}
\end{figure}
Let \(\Gamma\) denote a Jordan curve and let \(\Omega\) denote the 
component of \(\overline{\bC}\setminus \Gamma\) containing 
the point at infinity. The canonical exterior conformal map \(\phi:\Omega\rightarrow \{w\in \overline{\mathbb{C}}:|w|>1\}\) has an inverse,  denoted by $\psi(w)$, that extends to a homeomorphism of the exterior of the open unit disk onto $\overline{\Omega}$. The \(n\)th Faber polynomial \(F_n\) is the polynomial part of the Laurent expansion of \(\phi(z)^n\) at 
\(\infty\), and can be represented in integral form by 
\begin{align}\label{int-forms-Faber-Poly}
F_n(z)={} &\frac{1}{2\pi i}\int_{\Gamma_r}\frac{\phi(\zeta)^n}{\zeta-z}d\zeta=\frac{1}{2\pi i}\int_{|w|=r}\frac{w^n\psi'(w)}{\psi(w)-z}dw.
\end{align}
This representation is valid for \(r>1\) and \(z\) interior to the level curve \(\Gamma_r:=\{z\in\Omega:|\phi(z)|=r\}\).  
The second integral in \eqref{int-forms-Faber-Poly}   gives rise to the generating series for the Faber polynomials
\begin{align}\label{eq:faber_generating_series}
	\frac{w\psi'(w)}{\psi(w)-z}=1+\sum_{k=1}^\infty F_k(z)w^{-k},\qquad |w|>r> 1,
\end{align}	
valid for $z$ inside $\Gamma_r$, see e.g. \cite{suetin84}*{Chapter II}.

We will primarily make use of another integral representation for the Faber polynomials
due to Pommerenke (Theorem~\ref{thm:pommerenke_representation} below). 
Following \cites{pommerenke64,pommerenke65}, 
we fix \(\theta\in [0,2\pi)\), and define
\[
f_\theta(w):=\log\left(\frac{\psi(w)-\psi(e^{i\theta})}{aw}\right),\qquad |w|>1.
\]
This  \(f_\theta\) is analytic in \(|w|>1\), and for \(|w|>r>1\) we have the equality
\begin{align}\label{eq:first_pommerenke_form}
	f_\theta(w)=f_\theta(\infty)+\frac{1}{\pi }\int_{\alpha}^{\alpha+2\pi}
	\frac{ire^{it}w^{-1}}{1-re^{it}w^{-1}}\arg\left(\frac{\psi(re^{it})-\psi(e^{i\theta})}{are^{it}}\right)dt\,,
\end{align}
where \(\alpha\) is any real number such that \(\alpha<\theta<\alpha+2\pi\), 
see \cite{pommerenke65}*{Lemma~1}.

Let $\tau(w)$ be a conformal map of the open unit disk onto the interior domain $D$ of  the Jordan curve $\Gamma$, extended as a homeomorphism of the closed unit disk  onto $\overline{D}$. Let \(\ell\) be the straight line segment
between \(\tau^{-1}(\psi(e^{i\theta}))\) and \(\tau^{-1}(\psi(e^{i\alpha}))\) and consider the curve  
\[
C_\alpha:=\{\tau(w):w\in \ell 
\}\cup\{\psi(\rho e^{i\alpha}):\rho\geq 1\}.
\]
The open set \(U_\alpha:=\mathbb{C}\setminus C_\alpha\) is simply-connected 
and so there exists a branch of the argument \(\arg(z-\psi(e^{i\theta}))\) (as a function of \(z\)) 
on \(U_\alpha\). We take this branch of the argument for \(z\) on every level curve 
\(\{\psi(re^{it}):\alpha\leq  t<\alpha+2\pi\}\), \(r>1\), and on 
\(\Gamma\setminus \{\psi(e^{i\theta}),\psi(e^{i\alpha})\}\). At every point where any of these 
curves meet \(C_\alpha\) there is a \(2\pi\)-jump in the value of the argument. 

Accordingly, we can write \eqref{eq:first_pommerenke_form} as 
\begin{align}\label{eq:second_pommerenke_form}
	f_\theta(w)={} & f_\theta(\infty)+\frac{1}{\pi }\int_{\alpha}^{\alpha+2\pi}
	\frac{ire^{it}w^{-1}}{1-re^{it}w^{-1}}\left(\arg\left(\psi(re^{it})-\psi(e^{i\theta})\right)-t\right)dt,
\end{align}
whenever \(|w|>r>1\).

The curve \(\Gamma\) is said to be of \emph{bounded secant variation} (\(\Gamma\in BSV\)) 
provided that for every \(\theta\), the function \(t\mapsto \arg\left(\psi(e^{it})-\psi(e^{i\theta})\right)\)
is of bounded variation on \([\alpha,\theta)\cup (\theta,\alpha+2\pi]\).  We write
\[
v_\theta(t):=\arg\left(\psi(e^{it})-\psi(e^{i\theta})\right),\qquad t\in [\alpha,\theta)\cup (\theta,\alpha+2\pi].
\]
If \(\Gamma\in BSV\), then \(\Gamma\) has  one-sided tangents at each \(\psi(e^{i\theta})\). 
This implies that the function \(z\mapsto \arg(z-\psi(e^{i\theta}))\) remains 
bounded on \(\Omega\setminus C_\alpha\). We take the limit as \(r\to 1+\) 
in \eqref{eq:second_pommerenke_form} followed by integration by parts 
(see e.g. \cite{folland99}*{Theorem 3.36}) to get  
\begin{align}\label{thirdfaberintegral}
	f_\theta(w)={} & f_\theta(\infty)+\frac{1}{\pi }\int_{\alpha}^{\alpha+2\pi}
	\log(1-e^{it}w^{-1})dv_\theta(t)	,\qquad |w|>1,
\end{align}
where we also used the fact that 
\begin{align*}
	\int_{\alpha}^{\alpha+2\pi}\log(1-e^{it}w^{-1})dt=0	,\qquad |w|>1.
\end{align*}
Differentiating \eqref{thirdfaberintegral} with respect to \(w\) we find \begin{align*}
	\frac{w\psi'(w)}{\psi(w)-\psi(e^{i\theta})}
	=1+\sum_{k=1}^\infty\frac{w^{-n}}{\pi}\int_{\alpha}^{\alpha+2\pi}e^{int}dv_\theta(t),\qquad |w|>1,
\end{align*}
which in view of \eqref{eq:faber_generating_series} yields the following.
\begin{thm}[Pommerenke \cite{pommerenke64,pommerenke65}]
	\label{thm:pommerenke_representation}
	If \(\Gamma\in BSV\), then 
	\begin{align}\label{eq:faber_representation}
		F_n(\psi(e^{i\theta}))=\frac{1}{\pi}\int_{\alpha}^{\alpha+2\pi}e^{int}dv_\theta(t),\qquad n\geq 1.
	\end{align}
\end{thm}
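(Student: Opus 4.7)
The plan is to derive \eqref{eq:faber_representation} by comparing two Laurent expansions in $w^{-1}$ on $\{|w|>1\}$ of the generating function $w\psi'(w)/(\psi(w)-\psi(e^{i\theta}))$: the one supplied by \eqref{eq:faber_generating_series} specialized at $z=\psi(e^{i\theta})$, and a second expansion obtained by differentiating Pommerenke's identity \eqref{eq:second_pommerenke_form} after passing to the boundary. Since the Laurent series in $\{|w|>1\}$ is unique, matching the $w^{-n}$ coefficients will yield the desired formula.

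The crucial technical step is taking the limit $r\to 1+$ in \eqref{eq:second_pommerenke_form}. The $BSV$ hypothesis ensures that $v_\theta$ has finite one-sided limits at $\theta$ and finite total variation on $[\alpha,\theta)\cup(\theta,\alpha+2\pi]$, and that $z\mapsto \arg(z-\psi(e^{i\theta}))$ is bounded on $\Omega\setminus C_\alpha$. For any fixed $|w|>1$, the Cauchy-type kernel $ire^{it}w^{-1}/(1-re^{it}w^{-1})$ is bounded uniformly in $(r,t)\in(1,2)\times[\alpha,\alpha+2\pi]$, so dominated convergence applies and delivers a boundary version of the identity. Invoking the integration-by-parts formula in \cite{folland99}*{Theorem 3.36} together with $\int_\alpha^{\alpha+2\pi}\log(1-e^{it}w^{-1})\,dt=0$ then converts the result into \eqref{thirdfaberintegral}.

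From \eqref{thirdfaberintegral} I would substitute the Taylor expansion
\[
\log(1-e^{it}w^{-1})=-\sum_{k=1}^\infty \frac{e^{ikt}}{k}w^{-k},
\]
which converges uniformly for $w$ in compact subsets of $\{|w|>1\}$ and uniformly in $t$. Interchanging summation and Riemann--Stieltjes integration gives a Laurent expansion of $f_\theta(w)$, and differentiating termwise in $w$ and multiplying by $w$ produces an expansion of $w\psi'(w)/(\psi(w)-\psi(e^{i\theta}))$ whose $w^{-n}$ coefficient is $\pi^{-1}\int_\alpha^{\alpha+2\pi}e^{int}\,dv_\theta(t)$. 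Comparison with \eqref{eq:faber_generating_series} at $z=\psi(e^{i\theta})$ concludes the proof.

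The main obstacle, in my view, is the justification of the passage $r\to 1+$ and a careful interpretation of the Stieltjes integral $\int_\alpha^{\alpha+2\pi}e^{int}\,dv_\theta(t)$ across the potential jump of $v_\theta$ at $t=\theta$: once $v_\theta$ is extended to $\theta$ via its one-sided limits, the $BSV$ hypothesis provides finite total variation on the closed interval and all subsequent manipulations (dominated convergence, integration by parts, termwise differentiation) become unambiguous.
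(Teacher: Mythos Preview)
Your proposal is correct and follows essentially the same route as the paper: pass to the limit $r\to 1+$ in \eqref{eq:second_pommerenke_form} using boundedness of the argument from the $BSV$ hypothesis, integrate by parts (with the vanishing of $\int_\alpha^{\alpha+2\pi}\log(1-e^{it}w^{-1})\,dt$) to reach \eqref{thirdfaberintegral}, expand and differentiate, and match coefficients against \eqref{eq:faber_generating_series}. The only cosmetic difference is that the paper differentiates \eqref{thirdfaberintegral} first and then expands the resulting geometric kernel, whereas you expand the logarithm in a power series and differentiate termwise; the two orderings are equivalent.
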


Observe that \(v_\theta(t)\) is continuous on \([\alpha,\theta)\cup (\theta,\alpha+2\pi]\) and that 
\[
v_\theta(\theta+)-v_\theta(\theta-)=\lambda(\theta)\pi, 
\]
where \(\lambda(\theta)\pi\) is the exterior angle formed by the two 
one-sided tangents to \(\Gamma\) at \(\psi(e^{i\theta})\), 
 see e.g. \cite{pommerenke92}*{Section~3.5}. We set $v_\theta(\theta):=v_\theta(\theta+)$, so that    
\begin{equation}\label{eq:measure_decomposition}
dv_\theta=d\tilde{v}_\theta+\lambda(\theta)\pi \delta_\theta,
\end{equation}
where \(\delta_\theta\) is the unit point mass at the point \(\theta\) and $d\tilde{v}_\theta$ is the restriction of $dv_\theta$ to \([\alpha,\theta)\cup (\theta,\alpha+2\pi]\). If \(\Gamma\) 
is piecewise smooth, then $d\tilde{v}_\theta$ is absolutely continuous and is explicitly given by
\[
d\tilde{v}_\theta(t)= \operatorname{Re}
	\left(\frac{e^{it}\psi'(e^{it})}{\psi(e^{it})-\psi(e^{i\theta})}\right)dt.
	\]

We label the corners 
of \(\Gamma\) as \(z_1,\dotsc,z_l\), with corresponding points 
\(\theta_k\in [0,2\pi)\) such that \(\psi(e^{i\theta_k}) = z_k\), and define
\begin{equation}
	\Theta_\Gamma = \{\theta_k+2\pi j\, :\, k=1,\dotsc,l,\quad j \in \mathbb{Z}\}.
	\label{eq:def_theta_Gamma}
\end{equation}

Gaier \cite{gaier99}*{Theorem~4} proved that every piecewise Dini-smooth 
Jordan curve is of bounded secant variation. Building on the results of 
\cite{gaier99}, Pritsker bases his proof of the upper bound of 
\eqref{eq:pritsker_bounds} on the integral representation 
\eqref{eq:faber_representation} plus  a few other facts that are 
derived in the course of the proof of  \cite{pritsker02}*{Theorem 2.1}. 
These facts are collected in the following lemma. 
\begin{lem}
	\label{lem:pritsker_modification}
	Suppose \(\Gamma\) is a piecewise Dini-smooth Jordan curve.
	\begin{enumerate}[label=\textnormal{(\roman*)}]
		\item 	For every \(\delta>0\) sufficiently small, 
		\begin{align}\label{eq:pommerenke_riemann_lebesgue}
			\lim_{n\rightarrow \infty}\sup_{\theta\in [0,2\pi)}
			\left|\int_{\theta+\delta}^{\theta-\delta +2\pi}e^{int}dv_\theta(t)\right|=0.
		\end{align}
		\item	For all \(\epsilon>0\), there exists \(\delta>0\)  such that 
		\begin{align}\label{eq:pommerenke_upper_bnd_no_corner}
			(\theta-\delta,\theta+\delta)\cap \Theta_\Gamma=\emptyset\implies 
			\frac{1}{\pi}\int_{\theta-\delta}^{\theta+ \delta}|dv_\theta(t)|\leq 1+\epsilon,
		\end{align}
		\begin{align}\label{eq:pommerenke_upper_bnd_corner}
			(\theta-\delta,\theta+\delta)\cap \Theta_\Gamma\not=\emptyset 
			\implies \frac{1}{\pi}\int_{\theta-\delta}^{\theta+ \delta}|dv_\theta(t)|\leq 2+\epsilon. 	
		\end{align}
	\end{enumerate}
\end{lem}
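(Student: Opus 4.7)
The plan is to prove (ii) first, then (i), each by combining the decomposition
\(dv_\theta = d\tilde{v}_\theta + \lambda(\theta)\pi\,\delta_\theta\) from
\eqref{eq:measure_decomposition} with the uniform continuity of \(\psi\) and
\(\psi^{-1}\) on the closed complement of the unit disk. The latter supplies the
foundational estimate \(|\psi(e^{it})-\psi(e^{i\theta})|\geq c_\delta > 0\)
whenever \(|t-\theta|\geq \delta\), uniformly in \(\theta\), and is the mechanism
that renders all subsequent bounds independent of the location of \(\theta\).

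For (ii), the atomic part contributes exactly \(\lambda(\theta)\pi\), which equals
\(\pi\) when \(\theta\notin\Theta_\Gamma\) and is at most \(2\pi\) otherwise. It
remains to bound \(|d\tilde v_\theta|([\theta-\delta,\theta+\delta])\) by
\(\epsilon\pi\) in case~1 and by \((1+\epsilon)\pi\) in case~2, uniformly in
\(\theta\). Away from \(\theta\),
\(\tilde v_\theta(t)=\arg(\psi(e^{it})-\psi(e^{i\theta}))\) is continuous, and the
expansion
\[
\psi(e^{it}) - \psi(e^{i\theta}) = (t-\theta)\bigl(ie^{i\theta}\psi'(e^{i\theta}) + r_\theta(t)\bigr), \qquad |r_\theta(t)|\leq C\omega(|t-\theta|),
\]
valid on any Dini-smooth arc containing \(\theta\) (with \(\omega\) the arc's
tangent modulus of continuity), yields a uniform-in-\(\theta\) bound on the
variation of \(\tilde v_\theta\) near \(\theta\). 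This handles case~1 directly, and
the subcase of case~2 where \(\theta\) is itself a corner by applying the expansion
on each side. When \(\theta\) is smooth but a neighboring corner \(\theta_{k'}\)
lies in the interval, the continuous variation can be close to \(\pi\) since the
secant from \(\psi(e^{i\theta})\) sweeps between the directions of the two arcs
meeting at \(z_{k'}\); but Dini-smoothness of the two adjacent arcs still limits
the excess beyond \(\pi\) to at most \(\epsilon\pi\), producing the total bound
\((2+\epsilon)\pi\).

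For (i), the only atom of \(v_\theta\) is at \(\theta\), and the secant angle is
continuous across every other corner of \(\Gamma\). Thus on
\(I_\theta:=[\theta+\delta,\theta-\delta+2\pi]\) the measure \(dv_\theta\) is
absolutely continuous with density
\[
g_\theta(t)=\Re\left(\frac{e^{it}\psi'(e^{it})}{\psi(e^{it})-\psi(e^{i\theta})}\right),
\]
and the uniform lower bound on the denominator gives
\(|g_\theta(t)|\leq C_\delta|\psi'(e^{it})|\). Given \(\epsilon>0\), decompose
\(g_\theta=g_\theta^{\mathrm{sing}}+g_\theta^{\mathrm{reg}}\) by restriction to and
off a union of \(\eta\)-neighborhoods of the corner parameters. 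Since \(|\psi'|\)
is integrable near every boundary point---the \(L^1\)-mass on an
\(\eta\)-neighborhood of \(\theta_k\) being of order \(\eta^{\lambda_k}\) for
\(\lambda_k\in(0,2]\), with a logarithmic refinement when \(\lambda_k=0\)---one
obtains \(\|g_\theta^{\mathrm{sing}}\|_{L^1(I_\theta)}\leq C_\delta\,\varrho(\eta)\)
with \(\varrho(\eta)\to 0\), uniformly in \(\theta\). Choose \(\eta\) so this is
below \(\epsilon/2\). The regular part \(g_\theta^{\mathrm{reg}}\) is bounded and
inherits a uniform-in-\(\theta\) \(L^1\)-modulus of continuity on \(I_\theta\)
from Dini-smoothness of \(\psi\) away from corners. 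The standard inequality
bounding a Fourier coefficient by the \(L^1\)-modulus at scale \(\pi/n\) then
shows \(\int_{I_\theta} e^{int} g_\theta^{\mathrm{reg}}(t)\,dt \to 0\) uniformly
in \(\theta\) as \(n\to\infty\); taking \(n\) large completes (i).

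The central difficulty throughout is securing uniformity in \(\theta\) when
\(\theta\) is close to a cusp, since there both \(\psi'(e^{i\theta})\) degenerates
(to \(0\) or \(\infty\)) and \(|\psi(e^{it})-\psi(e^{i\theta})|\) can shrink much
faster than \(|t-\theta|\). The remedy is that the quantities we use---the lower
bound \(c_\delta\), the \(L^1\)-mass of \(|\psi'|\) near corners, and the Dini
modulus away from corners---depend only on the geometry of \(\Gamma\) and on
\(\delta, \eta\), never on \(\theta\). A secondary technical point is the
outer-cusp case \(\lambda_k=0\), where \(\psi'\) has a worse-than-power
singularity at \(e^{i\theta_k}\); one handles it by bounding
\(\int_{|t-\theta_k|<\eta}|\psi'(e^{it})|\,dt\) directly via integrability rather
than a power-law estimate.
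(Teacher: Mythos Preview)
Your outline for (i) is sound and is essentially Pritsker's argument, which is all the paper invokes for that part. For (ii), however, there are two gaps.

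First, the expansion you write is not uniform in $\theta$. In the $t$-variable the remainder $r_\theta$ is controlled by the modulus of continuity of $u\mapsto \psi'(e^{iu})$, not by that of the arc-length tangent $z'(s)$; since $\psi'(e^{iu})\sim c\,(u-\theta_k)^{\lambda_k-1}$ near a corner, it is neither bounded nor Dini-continuous there, and when $\theta$ sits at distance exactly $\delta$ from some $\theta_k$ (which is permitted in \eqref{eq:pommerenke_upper_bnd_no_corner}) the interval $(\theta-\delta,\theta+\delta)$ reaches that singularity and the bound $|r_\theta(t)|\le C\omega(|t-\theta|)$ breaks down. The paper avoids this by working throughout in the arc-length parametrization $z(s)$ of $\Gamma$ (Lemma~\ref{lem:dini_arc_local_variation}), where $|z'|\equiv1$ and the uniformity is automatic; your argument is easily repaired the same way.

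Second, and more seriously: in the subcase of \eqref{eq:pommerenke_upper_bnd_corner} where $\theta\notin\Theta_\Gamma$ but, say, $\theta_k\in(\theta,\theta+\delta)$, you must bound the variation of $\arg(\psi(e^{it})-\psi(e^{i\theta}))$ over $(\theta_k,\theta+\delta)$ by $\pi+\epsilon$. This is the angular variation along a short Dini-smooth arc as seen from a point $\zeta=\psi(e^{i\theta})$ that lies \emph{off} that arc yet may be arbitrarily close to its endpoint $z_k$. Your sentence ``Dini-smoothness limits the excess beyond $\pi$'' is not an argument: a direct comparison with the straight-segment case produces an error of order $s\,\omega(s)/|z(s)-\zeta|$, and the denominator has no uniform lower bound in precisely this regime. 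The paper supplies the missing step as Lemma~\ref{lem:variation_estimate_away_from_arc}, proved by observing that for any partition $\{s_j\}$ the function $\zeta\mapsto\sum_j\bigl|\arg(z(s_j)-\zeta)-\arg(z(s_{j-1})-\zeta)\bigr|$ is subharmonic on $\overline{\mathbb C}\setminus\gamma$, with boundary values $\le\pi+\epsilon$ by Lemma~\ref{lem:dini_arc_local_variation}, so the maximum principle transfers the bound to all $\zeta\notin\gamma$. Without this subharmonic-majorization device (or an equivalent), your argument for \eqref{eq:pommerenke_upper_bnd_corner} is incomplete.
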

In \eqref{eq:pommerenke_upper_bnd_corner}, $\delta$ is assumed small enough 
that for every $\theta$ the set  $(\theta-\delta,\theta+\delta)\cap \Theta_\Gamma$ 
contains at most one point. For the proof of the assertion (i) of the lemma, we refer to Pritsker \cite{pritsker02}*{p.~170-171}. The implications \eqref{eq:pommerenke_upper_bnd_no_corner} and \eqref{eq:pommerenke_upper_bnd_corner} are obtained during the proof of our Theorem \ref{thm:faber_upper_bnd}: \eqref{eq:pommerenke_upper_bnd_no_corner} follows from \eqref{pomm34} and \eqref{pomm36}, while \eqref{eq:pommerenke_upper_bnd_corner} is actually improved by the sharper bound \eqref{pomm41}.

We now proceed with the proof of our main theorem.
\begin{proof}[Proof of Theorem \ref{thm:main}]
	We will construct weighted Faber polynomails $Q_{n,m}(z)$, $n,m\in\mathbb{N}$, which for $r>1$ and $z$ interior to the level curve $\Gamma_r=\{z\in \Omega:|\phi(z)|=r\}$ can be expressed in integral form by (cf. \eqref{int-forms-Faber-Poly})
	\begin{align}\label{int-rep-for-Qnm}
	Q_{n,m}(z) =\frac{1}{2\pi i}\int_{\Gamma_r}\frac{P_{m}(\phi(\zeta))\phi(\zeta)^n}{\zeta-z}d\zeta=\frac{1}{2\pi i}\int_{|w|=r}\frac{P_{m}(w)w^n\psi'(w)}{\psi(w)-z}dw,
\end{align}
with $P_m(w)$	an analytic function in $|w|>1$ such that $P_m(\infty)=1$.  It is easy to see that this $Q_{n,m}$ is indeed the polynomial part of the Laurent expansion of \(P_m(\phi(z))\phi(z)^n\) at \(\infty\). Thus, $Q_{n,m}$ is of degree \(n\) and leading coefficient \(\capacity(\Gamma)^{-n}\), and consequently, we have the inequality
	\begin{equation}
	\calW_{n}(\Gamma)\leq \|Q_{n,m}\|_\Gamma,\qquad n\geq 1.
	\label{eq:widom_upper_bnd}
\end{equation}
We will show that it is possible to choose $P_m(w)$ in such a way that \eqref{eq:faber_asymptotic_saturation} holds. 
	
	Fix a positive integer \(m\) and consider, for \(0<r_m<1\), the function 
	\[g_m(w)\coloneqq\prod_{k=1}^{l}\left(1-\frac{r_m w_k}{w}\right)^{1/m},\qquad|w|>r_m,
	\]
	with the branches specified by \(g_m(\infty) = 1\). By choosing \(r_m\) 
	sufficiently close to \(1\) we can ensure the existence of \(\delta_m>0\) 
	such that 
	\begin{equation}
		|g_m(e^{i\theta})|<1/2, \qquad \theta\in (\theta_k-\delta_m,\theta_k+\delta_m),\quad k=1,\ldots, l.
		\label{eq:gm_close_to_corner}
	\end{equation}
	We take $\delta_m$ small enough that each segment \((\theta-\delta_m,\theta+\delta_m)\) 
	contains at most one point of \(\Theta_\Gamma\). 
	From the definition of \( g_m \) we readily see that
	\begin{equation}
		\max_{|w| = 1}|g_m(w)|\leq 2^{l/m}.
		\label{eq:gm_upperbound}
	\end{equation}
	Moreover, \(g_m\) has a Laurent expansion on $|w|>r_m$ of the form
	\[
	g_m(w) = 1+\sum_{j=1}^{\infty} a_jw^{-j},
	\]
	which converges uniformly on \(|w|\geq 1\). 
	Thus we can find a sufficiently large integer \(d_m\) such that
	\begin{equation}
		\max_{|w| = 1}\left|g_m(w) - \left(1+\sum_{j=1}^{d_m} a_jw^{-j}\right)\right|<1/m.
		\label{eq:laurent_diff}
	\end{equation}
	If we let
	\[P_m(w)\coloneqq 1+\sum_{j=1}^{d_m}a_jw^{-j},\]
	then it follows from \eqref{eq:gm_upperbound} and \eqref{eq:laurent_diff} that
	\begin{equation}
		\max_{|w| = 1}|P_m(w)|<m^{-1}+2^{l/m}.
		\label{eq:Pm_global_upper_bnds}
	\end{equation}
	Additionally,  from \eqref{eq:gm_close_to_corner} we get
	\begin{equation}
		|P_m(e^{i\theta})|<1/2+1/m, \qquad \theta\in (\theta_k-\delta_m,\theta_k+\delta_m),\quad k=1,\ldots, l.
		\label{eq:Pm_upper_bound_corner}
	\end{equation}
	
Let \(Q_{n,m}\) be the polynomial defined by \eqref{int-rep-for-Qnm}.	By the linearity of the integral, we obtain from  \eqref{int-rep-for-Qnm} and \eqref{int-forms-Faber-Poly} that
	\begin{align}
		\begin{split}
			Q_{n,m}(z) &= F_n(z)+\sum_{j=1}^{d_m}a_jF_{n-j}(z).
		\end{split}
		\label{eq:weighted_faber}
	\end{align}
	Using this together with Pommerenke's formula \eqref{eq:faber_representation} we obtain
	\begin{equation}
		Q_{n,m}(\psi(e^{i\theta})) = \frac{1}{\pi}\int_{0}^{2\pi}e^{int}P_m(e^{it})dv_\theta(t).
		\label{eq:weighted_faber_representation}
	\end{equation}
	
	Let \(\epsilon>0\) be given. By the second part of Lemma \ref{lem:pritsker_modification} 
	we can choose a \(\delta\in(0,\delta_m/2)\) so that
	\begin{equation}
		\frac{1}{\pi}\int_{\theta-\delta}^{\theta+\delta}|dv_\theta(t)|\leq \begin{cases}
			1+\epsilon, &(\theta-\delta,\theta+\delta)\cap\Theta_\Gamma = \emptyset \\
			2+\epsilon, &(\theta-\delta,\theta+\delta)\cap\Theta_\Gamma \neq \emptyset.
		\end{cases}
		\label{eq:variation_bds}
	\end{equation}
	We split the integral in 
	\eqref{eq:weighted_faber_representation} into two parts 
	\begin{equation}
		Q_{n,m}(\psi(e^{i\theta})) = \frac{1}{\pi}
		\left(\int_{\theta-\delta}^{\theta+\delta}+\int_{\theta+\delta}^{\theta+2\pi-\delta}\right)
		e^{int}P_{m}(e^{it})dv_\theta(t).
		\label{eq:split_integral}
	\end{equation}
	From \eqref{eq:Pm_global_upper_bnds}, \eqref{eq:Pm_upper_bound_corner} and \eqref{eq:variation_bds} we gather that
	\[\left|\frac{1}{\pi}\int_{\theta-\delta}^{\theta+\delta}e^{int}P_m(e^{it})dv_\theta(t)\right|\leq \begin{cases}
		\left(m^{-1}+2^{l/m}\right)(1+\epsilon), & (\theta-\delta,\theta+\delta)\cap\Theta_\Gamma=\emptyset, \\
		(2+\epsilon)(1/2+1/m),& (\theta-\delta,\theta+\delta)\cap\Theta_\Gamma\neq \emptyset,
	\end{cases}
	\]
	and conclude that
	\begin{equation}		\label{eq:local_upper_bnd}
		\sup_{\theta\in [0,2\pi)}\left|\frac{1}{\pi}
		\int_{\theta-\delta}^{\theta+\delta}e^{int}P_m(e^{it})dv_\theta(t)\right|
		\leq (1+\epsilon)\left(2^{l/m}+2/m\right).
	\end{equation}
		On the other hand, the first part of Lemma \ref{lem:pritsker_modification} implies that
	\begin{align}
		\begin{split}
			&\sup_{\theta\in [0,2\pi)}\left|\frac{1}{\pi}\int_{\theta+\delta}^{\theta-\delta+2\pi}
			e^{int}P_{m}(e^{it})dv_\theta(t)\right|\\& 
			= \sup_{\theta\in [0,2\pi)}\frac{1}{\pi}\left|
			\int_{\theta+\delta}^{\theta-\delta+2\pi}\left(e^{int}
			+\sum_{j=1}^{d_m}a_je^{i(n-j)t}\right)dv_\theta(t)\right|\rightarrow 0		
		\end{split}
		\label{eq:negligible_remainder}
	\end{align}
	as \(n\to \infty\). By combining \eqref{eq:local_upper_bnd} 
	and \eqref{eq:negligible_remainder} with \eqref{eq:split_integral} we obtain
	\[
	\limsup_{n\rightarrow \infty}\max_{z\in \Gamma}|Q_{n,m}(z)| 
	= \limsup_{n\rightarrow \infty}\max_{\theta\in [0,2\pi)}
	|Q_{n,m}(\psi(e^{i\theta}))|\leq (1+\epsilon)(2^{l/m}+2/m).
	\]
	Since \(\epsilon>0\) was arbitrary we can let it tend to \(0\) 
	and then \eqref{eq:widom_upper_bnd} gives us that
	\[\limsup_{n\rightarrow \infty}\calW_{n}(\Gamma)\leq 2^{l/m}+2/m.\]
	By letting \(m\to \infty\), and in view of Szeg\H{o}'s lower bound \eqref{eq:szego_inequality},  we obtain 
	\[\lim_{n\rightarrow \infty}\calW_{n}(\Gamma)= 1,\]
which concludes the proof.
\end{proof}

\section{Angular variation for Dini-smooth arcs}
\label{sec:variation_analysis}
\noindent We now study the local properties of the angular variation on a Dini-smooth arc. Our aim is to establish a refinement of 
\eqref{eq:pommerenke_upper_bnd_corner} necessary for the proof of our  Theorem~\ref{thm:faber_upper_bnd}. 

Any arc-length parametrization $z(s)$, $s\in [a,b]$ of a Dini-smooth arc (i.e., one satisfying $|z'(s)|=1$) is also Dini-smooth, and so is its extension to $(-\infty,\infty)$ given by
\begin{align}\label{extension-parametrization}
z(s):=\begin{cases}
	z(a)+(s-a)z'(a),&\ s<a,\\
	z(b)+(s-b)z'(b), & \ s>b.
\end{cases}
\end{align}
Note that this extension still satisfies the condition that $|z'(s)|=1$ 
and that the moduli of continuity of $z'(s)$ on the intervals $(-\infty,\infty)$ and $[a,b]$ are one and the same.

 The following result is due to Gaier \cite{gaier99}*{Theorem~5}.	
\begin{lem}
	Let \(z(s):[a,b]\rightarrow \bC\) be an arc-length parametrization of a 
	Dini-smooth arc. There exists \(\delta>0\) such that for every interval $I\subset [a,b]$ of length $|I|<\delta$ and every point \(s_0\in I\), we have 
	\begin{equation}
		\int_{I\setminus \{s_0\}}\left|d_s \arg\left(z(s)-z(s_0)\right)\right|
		\leq 8\int_{0}^{\delta}\frac{\omega(t)}{t}dt,
		\label{eq:variation_upper_bnd_mod_cont}
	\end{equation}
	where $\omega(t)$ is the modulus of continuity of $z'$.
	\label{lem:dini_arc_local_variation}
\end{lem}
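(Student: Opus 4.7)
The plan is to factor out the leading linear behavior of $z(s)-z(s_0)$ and analyze only the multiplicative remainder. For $s\neq s_0$ I would write
\[
z(s) - z(s_0) = z'(s_0)(s-s_0)\bigl(1 + \eta(s)\bigr),\qquad \eta(s) := \frac{1}{z'(s_0)(s-s_0)} \int_{s_0}^s \bigl(z'(u) - z'(s_0)\bigr)\, du.
\]
Since $|z'(s_0)|=1$ and $|z'(u)-z'(s_0)|\leq \omega(|s-s_0|)$ for $u$ between $s_0$ and $s$, the pointwise bound $|\eta(s)|\leq \omega(|s-s_0|)$ is immediate. Because $\omega(t)\to 0$ as $t\to 0$, I would fix a single $\delta>0$ with $\omega(\delta)\leq 1/2$; this choice works uniformly in $s_0$ because the estimate on $\eta$ depends on $s_0$ only through the common modulus $\omega$ of $z'$. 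Then $|1+\eta(s)|\geq 1/2$ throughout, so $z(s)\neq z(s_0)$ on $I\setminus\{s_0\}$ and the principal branch of $\log(1+\eta)$ is well defined. Moreover $\arg\bigl(z'(s_0)(s-s_0)\bigr)$ is constant on each of the (at most two) components of $I\setminus\{s_0\}$, so the variation of $\arg(z(s)-z(s_0))$ equals that of $\arg(1+\eta(s))$ on each component.

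I would then differentiate $\eta$ on each side of $s_0$ by the quotient rule, obtaining
\[
\eta'(s) = \frac{z'(s)-z'(s_0)}{z'(s_0)(s-s_0)} - \frac{\eta(s)}{s-s_0},
\]
from which $|\eta'(s)|\leq 2\omega(|s-s_0|)/|s-s_0|$. Combined with $|1+\eta(s)|\geq 1/2$,
\[
|d_s\arg(1+\eta(s))| = \Bigl|\Im\frac{\eta'(s)}{1+\eta(s)}\Bigr|\,ds \leq \frac{|\eta'(s)|}{|1+\eta(s)|}\,ds \leq \frac{4\,\omega(|s-s_0|)}{|s-s_0|}\,ds.
\]
Integrating over $I\setminus\{s_0\}$ and substituting $t=|s-s_0|$ separately on each side of $s_0$, each half-interval contributes at most $4\int_0^{\delta}\omega(t)/t\,dt$, and adding the two sides yields the constant $8$ in \eqref{eq:variation_upper_bnd_mod_cont}. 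The right-hand side is finite by the Dini condition \eqref{modulus-integrability}.

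I expect the main technical point to be verifying that the above pointwise bound on $d_s\arg(1+\eta(s))$ genuinely controls the total variation, not merely the oscillation. This is ensured because $\eta$ is continuously differentiable on each component of $I\setminus\{s_0\}$ by the fundamental theorem of calculus, and $|1+\eta|\geq 1/2$ stays uniformly bounded away from zero, so $\arg(1+\eta(s))$ is genuinely $C^1$ on each component and its total variation equals $\int|d_s\arg(1+\eta)|$. The uniformity of $\delta$ across all base points, as well as the symmetry of the two halves of $I\setminus\{s_0\}$, then follows from the fact that every estimate depends on $s_0$ only through the single modulus $\omega$ of $z'$.
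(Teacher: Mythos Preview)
Your argument is correct and is essentially the same as the paper's: after the paper normalizes to $z(s_0)=0$, $z'(s_0)=1$, $s_0=0$, its function $\theta(s)=\arg z(s)$ is precisely your $\arg(1+\eta(s))$, and its lower bound $|z(s)|\geq x(s)\geq \tfrac{3}{4}s$ plays the role of your $|1+\eta|\geq \tfrac{1}{2}$. The only difference is packaging---you work coordinate-free with the multiplicative remainder $1+\eta$, while the paper works with real and imaginary parts after normalizing---but the estimates, the constant $4$ per half-interval, and the final constant $8$ are obtained in the same way.
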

The bounding constant \(8\) can be replaced by any value larger than \(4\), 
however, for our purpose \(8\) will suffice.

\begin{proof}We extend the parametrization $z(s)$ via \eqref{extension-parametrization} and prove the equivalent statement that there exists a $\delta>0$ such that for every $s_0\in [a,b]$, the inequality \eqref{eq:variation_upper_bnd_mod_cont} holds for $I=(s_0-\delta,s_0+\delta)$.
	
Because $z'$ is continuous, we can use the mean value theorem to prove the existence of a \(\delta>0\) such that 
	\begin{equation}\label{eq:diff_quot}
		\left|\frac{z(s)-z(s_0)}{s-s_0}-z'(s_0)\right|<\frac{1}{4}\quad \text{and} \quad \langle z'(s),z'(s_0)\rangle >0
	\end{equation}
	if \(|s-s_0|<\delta\) and \( s_0\in [a,b]\). Here \(\langle\cdot,\cdot\rangle\) 
	denotes the usual scalar product. Let us focus our attention on a single (but arbitrary) pair \((s_0,z(s_0))\).

	By applying a translation and a rotation, we may assume that \(z(s_0) = 0\) 
	and that $z'(s_0)>0$. In addition, by performing the change of variables $s\mapsto s+s_0$, we may also assume that $s_0=0$. After these transformations, the modulus of continuity of $z'$ and its unit modularity $|z'|=1$ remain unchanged. 
	
	Let us write $z(s)=x(s)+iy(s)$, so that $x'(0)=1$ and $y'(0)=0$. From \eqref{eq:diff_quot} we gather that if \(0<s<\delta\), then
	\begin{align}\label{IneqXandY}
		\left|\frac{x(s)}{s}-1\right|<\frac{1}{4}
	\end{align}
	and \(x'(s)>0\), which has the effect that \(\Re z(s)>0\) for \(0<s<\delta\). We can
	therefore define $\theta(s):= \arg z(s)=\Im(\log z(s))$, $0<s<\delta$, so that 
	\[
	|\theta'(s)|=\left|\Im \frac{z'(s)}{z(s)}\right|=\frac{\left|y'(s)x(s)-y(s)x'(s)\right|}{|z(s)|^2}.	
	\]
	Making use of the inequalities
	\begin{align}\label{pomm48}
	|x'(s)|\leq |z'(s)| = 1,\qquad |y(s)| \leq \int_{0}^{s}|y'(t)|dt
	\leq \int_{0}^{s}\omega(t)dt\leq s\omega(s),
	\end{align}
and \eqref{IneqXandY}, we find 
	\begin{align*}
		|\theta'(s)|&\leq\frac{ \left|y'(s)x(s)-y(s)x'(s)\right|}{x(s)^2}
		\leq\frac{\omega(s)}{x(s)}+\frac{s\omega(s)}{x(s)^2}\\
		& \leq \left(\frac{1}{(1-1/4)s}+\frac{1}{(1-1/4)^2s}\right)\omega(s)\leq 4\, \frac{\omega(s)}{s}.
	\end{align*}
	Therefore,
	\[
	\int_{(0,\delta)}\left|d_s\arg z(s)\right|
	= \int_{0}^{\delta}\left|\theta'(s)\right|ds\leq 4\int_{0}^{\delta} \frac{\omega(s)}{s}ds.
	\]
	By reversing the direction of the arc and considering \((-\delta,0)\) instead, the same argument yields 
	\[
	\int_{(-\delta,0)}\left|d_s\arg z(s)\right|\leq 4\int_{0}^{\delta} \frac{\omega(s)}{s}ds,
   	\]
	and since the bounds are based on \eqref{eq:diff_quot}, which holds uniformly in \(s_0\), the result follows.
\end{proof}

\begin{lem}
Let  \(z(s): [a,b]\rightarrow \bC\) be an arc-length parametrization of a 
	Dini-smooth Jordan arc \(\gamma\). For every \(\epsilon>0\), there exists \(\delta>0\) such that  
	\begin{equation}
		\int_I\left|d_s\arg\bigl(z(s)-\zeta\bigr)\right|\leq \pi+\epsilon 
		\label{eq:variation_estimate_outside}
	\end{equation}
	for every \(\zeta\in \bC \setminus \gamma\) and any interval \(I\subset [a,b]\) of length \(|I|<\delta\).
	\label{lem:variation_estimate_away_from_arc}
\end{lem}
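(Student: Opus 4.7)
The strategy is to reduce this statement to the elementary visual-angle estimate: for a straight-line segment $\ell$ and any $\zeta \notin \ell$, one has $\int |d_s \arg(\ell(s) - \zeta)| \leq \pi$. A Dini-smooth perturbation of this inequality should then yield the desired $\pi + \epsilon$ bound.

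I would begin exactly as in the proof of Lemma~\ref{lem:dini_arc_local_variation}: extend $z$ to all of $\mathbb{R}$ linearly via \eqref{extension-parametrization}, fix $s_0 \in [a,b]$, and by a rigid motion assume $z(s_0) = 0$ and $z'(s_0) = 1$. For $\delta$ sufficiently small, Dini-smoothness implies that on $I = [s_0 - \delta, s_0 + \delta]$ one has $|z(s) - (s - s_0)| \leq W(|s - s_0|)$ and $|z'(s) - 1| \leq \omega(|s - s_0|)$, where $W(t) := \int_0^t \omega(u)\,du$; in particular $x(s) := \Re z(s)$ is strictly increasing. Two cases then arise. If $|\zeta - z(s_0)| \geq R\delta$ with $R = R(\epsilon)$ large, then $|z(s) - \zeta| \geq (R - 1)\delta$ on $I$, giving
\[
\int_I |d_s \arg(z(s) - \zeta)| \leq \int_I \frac{|z'(s)|}{|z(s) - \zeta|}\,ds \leq \frac{2}{R - 1} < \epsilon
\]
at once. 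Otherwise, using $x'(s) > 1/2$, I would parametrize the arc as a graph $Y(X)$ with $|Y'(X)| \leq 2\omega(\delta)$, reducing the integral to the total variation of $\phi(\tilde X) := \arg(\tilde X + ic(\tilde X))$ on a small interval $\tilde J$, where $c$ is $C^1$ with $|c'| \leq 2\omega(\delta) =: \alpha$ and does not vanish on $\tilde J$ (since $\zeta \notin \gamma$). We may then assume $c > 0$ throughout.

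The heart of the argument is to bound $\int_{\tilde J} |\phi'(\tilde X)|\,d\tilde X$ by $\pi + \epsilon$. A direct computation gives
\[
\phi'(\tilde X) = \frac{c'(\tilde X)\tilde X - c(\tilde X)}{\tilde X^2 + c(\tilde X)^2},
\]
which I would split into the ``straight-line'' piece $-c(\tilde X)/(\tilde X^2 + c(\tilde X)^2) \leq 0$ and the ``curvature'' perturbation $c'(\tilde X)\tilde X/(\tilde X^2 + c(\tilde X)^2)$. \textbf{The main obstacle} is that the naive pointwise bound $\alpha |\tilde X|/(\tilde X^2 + c^2)$ on the perturbation term integrates to a quantity proportional to $\log(|\tilde J|/\min_{\tilde J} c)$, which fails to be uniformly small as $\zeta$ approaches the arc. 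To get around this, I would localize near the point of closest approach $s^*$ of $\zeta$ to the arc: on a shrinking neighborhood $|s - s^*| < \eta$, a direct geometric analysis analogous to Lemma~\ref{lem:dini_arc_local_variation} absorbs the $\pi$-swing of $\arg$ (which is present when $\zeta$ is off the arc but absent in that lemma) with an additional error of order $\int_0^\eta \omega(t)/t\,dt$, while for $|s - s^*| \geq \eta$ the near-alignment of $z'(s)$ with $z(s) - \zeta$, quantified by the bound $|z'(s) - z'(s^*)| \leq \omega(|s - s^*|)$, makes the integrand $|\Im(z'(s)/(z(s) - \zeta))|$ small. Dini integrability of $\omega(t)/t$ is precisely the input that forces both error contributions to tend to $0$ with $\delta$, uniformly in $\zeta \notin \gamma$.
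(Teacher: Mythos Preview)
Your direct-estimate strategy is entirely different from the paper's, which never analyzes the dangerous regime of $\zeta$ close to (but off) $\gamma$ at all. Instead, following Gaier, the paper fixes a partition $c=s_0<\cdots<s_N=d$ of $I$ and considers
\[
h_N(\zeta)\coloneqq\sum_{k=1}^{N}\bigl|\arg(z(s_k)-\zeta)-\arg(z(s_{k-1})-\zeta)\bigr|,
\]
a finite sum of absolute values of harmonic functions, hence subharmonic and bounded on $\overline{\bC}\setminus\gamma$. As $\zeta\to z(s')$ with $s'\in(s_{k_0-1},s_{k_0})$, the $k_0$-th summand tends to an interior angle $\vartheta^{\pm}(s')<\pi+\epsilon$ of a nearly degenerate triangle, while the remaining terms are dominated by $\int_{I\setminus\{s'\}}|d_s\arg(z(s)-z(s'))|<\epsilon$ via Lemma~\ref{lem:dini_arc_local_variation}. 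The maximum principle then propagates $h_N\le\pi+2\epsilon$ to every $\zeta\notin\gamma$, uniformly in the partition, which yields \eqref{eq:variation_estimate_outside}. The logarithmic obstacle you flag simply never arises, because the only boundary case requiring analysis is $\zeta\in\gamma$, already handled by Lemma~\ref{lem:dini_arc_local_variation}.

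Your own resolution of that obstacle has a gap. After normalizing so that $\zeta=z(s^*)+ih$, the integrand $|\Im(z'(s)/(z(s)-\zeta))|$ contains a principal contribution of size $h/((s-s^*)^2+h^2)$, which is \emph{not} small on $|s-s^*|\ge\eta$: its integral there is $2\arctan(h/\eta)$, of order~$1$ unless $h\ll\eta$, whereas your Case~2 allows $h$ as large as $R\delta$. No choice of $\eta\in(0,\delta)$ makes both the near and far pieces small simultaneously. What does work in a direct argument is to abandon the split $\phi'=-c/(\tilde X^2+c^2)+c'\tilde X/(\tilde X^2+c^2)$ (whose first term, with variable $c$, has no clean bound) and instead separate the constant-$h$ part: writing $\phi'=h/|z-\zeta|^{2}+(\text{rest})$, one verifies $|z-\zeta|^{2}=((s-s^*)^2+h^2)(1+O(\omega(\delta)))$, so the first term integrates over all of $I$ to at most $\pi(1+O(\omega(\delta)))$, while the remainder is genuinely $O(\omega(|s-s^*|)/|s-s^*|)$ pointwise and hence $O\bigl(\int_0^\delta\omega(t)/t\,dt\bigr)$. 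No near/far decomposition is then needed.
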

\begin{proof}
	We again follow the ideas of Gaier, in particular the ones employed in 
	the proof of \cite{gaier99}*{Lemma~1}. One can verify that for any two 
	distinct points \(z_1,z_2\in \gamma\), the function \(\zeta\mapsto \arg(z_1-\zeta)-\arg(z_{2}-\zeta)\) 
	(which is well-defined as it is independent of the branch of \(z\mapsto \arg(z-\zeta)\) on \(\gamma\)) 
	is continuous on \(\bC\setminus \gamma\). By Lemma \ref{lem:dini_arc_local_variation}, 
	given \(\epsilon>0\) it is possible to find \(\delta>0\) such that 
	\[
	\int_{I\setminus \{s'\}}\left|d_s\arg\bigl(z(s)-z(s')\bigr)\right|<\epsilon
	\]
	for every interval \(I=[c,d]\subset [a,b]\) of length \(|I|=d-c<\delta\) 
	and point \(s'\in I\).  Consider a partition \(c=s_0<s_1<\ldots<s_N=d\) of \(I\). Since \(z'(s)\) is continuous, by decreasing \(\delta\) 
	if necessary we can guarantee that for every \(k=1,\ldots,N\) and \(s\in (s_{k-1},s_k)\), 
	the angle \(\vartheta^+_k(s)\) at \(z(s)\) of the triangle \(z(s_{k-1}),\, z(s),\, z(s_k)\) 
	and its conjugate  \(\vartheta^-_k(s)=2\pi -\vartheta^+_k(s)\) satisfy 
	\[
	\vartheta^\pm_k(s)<\pi +\epsilon.
	\]
	
	Let us introduce the function 
	\[
	h_N(\zeta)\coloneqq\sum_{k=1}^N\left|\arg\bigl(z(s_k)-\zeta\bigr)-\arg\bigl(z(s_{k-1})-\zeta\bigr)\right|.
	\]	
	For each \(k\), the function \(\arg(z(s_k)-\zeta)-\arg(z(s_{k-1})-\zeta)\) is a 
	branch of  \(\arg\left(\frac{z(s_k)-\zeta}{z(s_{k-1})-\zeta}\right)\), 
	thus it is harmonic on \(\overline{\bC}\setminus \gamma\). 
	It follows that \(h_N\) is subharmonic on \(\overline{\bC}\setminus \gamma\). 
	Because at each of its points the arc \(\gamma\) has a tangent line, 
	\(h_N\) is bounded on \(\overline{\bC}\setminus \gamma\). If \(s_{k_0-1}<s'<s_{k_0}\), then as \(\zeta\to z(s')\),
	\[
	h_N(\zeta)\to \sum_{k\not=k_0}|\arg(z_k-z(s'))-\arg(z_{k-1}-z(s'))|+\vartheta^\pm_k(s'),  
	\]
	the sign \(+\) or \(-\) chosen according to the side of \(\gamma\) the point \(\zeta\) approaches \(z(s')\) from. 
	It follows that 
	\[
	\limsup_{\zeta\to z(s')}h_{N}(\zeta)\leq 
	\int_{I\setminus\{s'\}}|d_s\arg(z(s)-z(s'))|+\pi+\epsilon<\pi+2\epsilon.
	\]
	By the maximum principle for subharmonic functions (see e.g. \cite{safftotik}*{Theorem 2.4}), we obtain 
	\(h_N(\zeta)\leq \pi+2\epsilon\) for all \(\zeta\in \mathbb{C}\setminus \gamma\), which in turn implies that 
	\[
	\int_{I\setminus \{s'\}}|d_s\arg(z(s)-\zeta)|\leq \pi+2\epsilon.\qedhere
	\]
\end{proof}
\begin{lem}	\label{lem:variation_upper_bnd}
	Assume that \(\zeta(t):[0,L_\zeta]\rightarrow \bC\) and \(z(s):[0,L_z]\rightarrow \bC\) 
	parametrize Dini-smooth Jordan arcs by arc-length that only intersect at \(z(0) = \zeta(0)\) 
	and that the smallest angle formed by the arcs is \(\mu\pi\in [0,\pi]\). 
	For every \(\epsilon>0\), there exists a \(\delta>0\) such that
	\begin{align}\label{pomm52}\int_{0}^{\delta}\left|d_s\arg\left(z(s)-\zeta(t)\right)\right|\leq \pi(1-\mu)+\epsilon,\quad 0<t<\delta.
		\end{align}
\end{lem}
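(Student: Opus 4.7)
The plan is to compare the integral in \eqref{pomm52} with its ``tangent-line'' counterpart. After translating and rotating, I may assume $z(0) = \zeta(0) = 0$, $\zeta'(0) = 1$, and $z'(0) = e^{i\mu\pi}$. The case $\mu = 0$ (tangential contact) is degenerate and follows immediately from Lemma~\ref{lem:variation_estimate_away_from_arc} applied to $\gamma = z([0,\delta])$ with $\zeta = \zeta(t)$, since $\zeta(t) \notin \gamma$ for $t > 0$; this yields the bound $\pi + \epsilon = (1-0)\pi + \epsilon$. I therefore assume $\mu \in (0,1]$, and write
\[
z(s) = se^{i\mu\pi} + R_z(s), \qquad \zeta(t) = t + R_\zeta(t),
\]
where $R_z(s) = \int_0^s (z'(u) - e^{i\mu\pi})\,du$ and $R_\zeta$ is defined analogously. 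Letting $\omega$ denote a common modulus of continuity dominating $z'$ and $\zeta'$, the Dini bounds $|R_z'(s)| \leq \omega(s)$, $|R_z(s)| \leq s\omega(s)$, and $|R_\zeta(t)| \leq t\omega(t)$ all hold.

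\textbf{Tangent-line computation.} For $s>0$ and $t>0$ we have $\frac{d}{ds}\arg(z(s)-\zeta(t)) = \Im(z'(s)/(z(s)-\zeta(t)))$, and the analogous expression for the tangent configuration is
\[
\Im\!\left(\frac{e^{i\mu\pi}}{se^{i\mu\pi} - t}\right) = \frac{-t\sin\mu\pi}{s^2 - 2st\cos\mu\pi + t^2},
\]
which has constant sign in $s$. Since $-\arg(se^{i\mu\pi}-t)$ serves as an antiderivative and its value moves monotonically from $\pi$ at $s=0$ to $\mu\pi$ as $s\to\infty$, I obtain the exact identity
\[
\int_0^\infty \left|\Im\!\left(\frac{e^{i\mu\pi}}{se^{i\mu\pi} - t}\right)\right|ds = (1-\mu)\pi,
\]
uniformly in $t > 0$. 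This is the main-term contribution to the desired bound.

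\textbf{Error estimate.} The crucial geometric input is the identity
\[
|se^{i\mu\pi} - t|^2 = (s-t)^2 + 4st\sin^2(\mu\pi/2),
\]
which combined with $(s-t)^2\geq\sin^2(\mu\pi/2)(s-t)^2$ yields $|se^{i\mu\pi}-t|\geq (s+t)\sin(\mu\pi/2)$ for $\mu\in(0,1]$. For $\delta$ small enough that $\omega(\delta) < \tfrac{1}{2}\sin(\mu\pi/2)$, this upgrades to $|z(s)-\zeta(t)|\geq \tfrac{1}{2}|se^{i\mu\pi}-t|$. The difference between the exact and tangent integrands decomposes algebraically as
\[
\frac{z'(s)}{z(s) - \zeta(t)} - \frac{e^{i\mu\pi}}{se^{i\mu\pi} - t} = \frac{R_z'(s)}{z(s)-\zeta(t)} - \frac{e^{i\mu\pi}\bigl(R_z(s)-R_\zeta(t)\bigr)}{(z(s)-\zeta(t))(se^{i\mu\pi}-t)},
\]
and inserting the lower bounds above together with the Dini estimates on $R_z, R_z', R_\zeta$, I get that the $L^1$-norm of this error over $[0,\delta]$ is dominated by a $\mu$-dependent constant multiple of $\omega(\delta) + \int_0^\delta \omega(s)/s\,ds$. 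By the Dini integrability condition \eqref{modulus-integrability} this tends to $0$ as $\delta\to 0$, yielding the required $\epsilon$-margin.

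\textbf{Main obstacle.} The main technical point is achieving uniformity in $t\in(0,\delta)$, especially when $t$ is small so that $\zeta(t)$ lies near the initial piece of the $z$-arc. The bound $|se^{i\mu\pi}-t|\geq (s+t)\sin(\mu\pi/2)$ is exactly what rules out concentration of mass in this problematic regime; the subsequent integrations only depend on $t$ through terms like $\omega(t)\leq \omega(\delta)$ or $\tfrac{1}{s+t}\leq \tfrac{1}{s}$, giving $t$-uniform bounds. The fact that this geometric lower bound degenerates as $\mu\to 0$ is precisely the reason $\mu=0$ must be isolated and handled via the more robust but coarser Lemma~\ref{lem:variation_estimate_away_from_arc}.
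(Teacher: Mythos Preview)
Your argument is correct and takes a genuinely different route from the paper's proof. The paper normalizes with \(z'(0)=1\) and then, for \(0<\mu<1\), introduces an auxiliary scale \(\delta_t\) defined by \(\delta_t\sqrt{\omega(\delta_t)}=t\): on \([0,\delta_t]\) the derivative \(\frac{d}{ds}\arg(z(s)-\zeta(t))\) is shown to have a sign, so the integral telescopes to roughly \((1-\mu)\pi\); on \([\delta_t,\delta]\) the integrand is bounded term-by-term via \eqref{eq:angle_derivative_estimate}, with the critical piece \(\frac{|v(t)|}{x(\delta_t)-u(t)}\) controlled by the choice of \(\delta_t\). The case \(\mu=1\) requires a separate computation.

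You instead linearize both arcs at once and subtract the exact tangent-line integral, relying on the elementary inequality \(|se^{i\mu\pi}-t|\geq (s+t)\sin(\mu\pi/2)\) to control the perturbation uniformly on all of \([0,\delta]\). This buys a cleaner structure: no intermediate scale, no separate treatment of \(\mu=1\), and the \(t\)-uniformity is transparent since every error term is dominated either by \(\omega(s)/s\) or by \(\frac{t\omega(t)}{(s+t)^2}\), whose \(s\)-integral is at most \(\omega(\delta)\). The paper's approach, on the other hand, makes it slightly more visible that only differentiability of \(\zeta\) at \(0\) (rather than Dini-smoothness) is needed---a point the paper records as a remark---though your argument can also be read that way once one replaces \(t\omega(t)\) by a generic \(o(t)\) bound for \(R_\zeta\).
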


\begin{figure*}[h!]
	\centering
	\includegraphics[width = 0.4\textwidth]{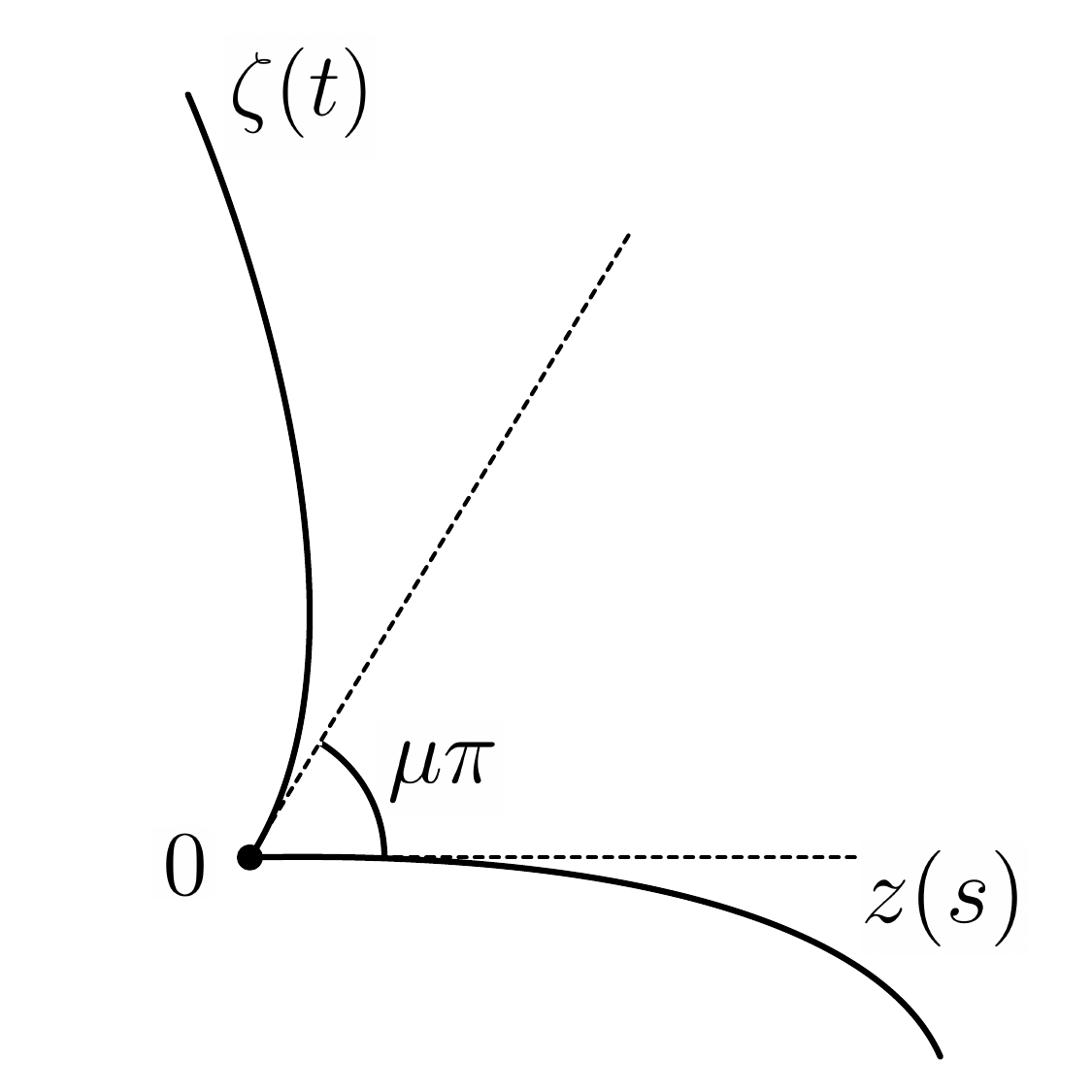}
\end{figure*}
\begin{rem}As  the proof of Lemma \ref{lem:variation_upper_bnd} will show, as far as the arc $\zeta$ is concerned, the condition that $\zeta(t)$ is Dini-smooth is unnecessary; all we need to assume is that $\zeta'(0)\not=0$.	
\end{rem}

\begin{proof}[Proof of Lemma \ref{lem:variation_upper_bnd}]
 After a translation and a rotation we may assume that $z(0)=0$ and that  $z'(0)=1$. Moreover, by applying a reflection about the $x$-axis, we may in addition assume that 
	\[
	\zeta(t) = te^{i\mu\pi}+o(t),\quad t\rightarrow 0+.
	\]

The case \(\mu = 0\) is already covered by Lemma \ref{lem:variation_estimate_away_from_arc}. We first suppose that \(0<\mu<1\). The idea of the proof is best conveyed through the example where the arc $z$ is the straight segment $z(s) = s$, $0\leq s\leq s_0$. In this case,
	\begin{align}\label{derivative-argument}
		\begin{split}
		\frac{d}{ds}\arg\left(z(s)-\zeta(t)\right) 
		& = \Im\left(\frac{d}{ds}\log\left(z(s)-\zeta(t)\right)\right) = \Im \frac{z'(s)}{z(s)-\zeta(t)}	\\
		& = \Im \frac{z'(s)\overline{\bigl(z(s)-\zeta(t)\bigr)}}{|z(s)-\zeta(t)|^2} 
		= \Im \frac{s-te^{-i\mu\pi}+o(t)}{|z(s)-\zeta(t)|^2}\\
		&=\frac{t\sin(\mu \pi)+o(t)}{|z(s)-\zeta(t)|^2}>0,\quad 0<t<\delta,
		\end{split}
	\end{align}
for some $\delta>0$ small. As a consequence, for every $\epsilon>0$ there is a corresponding $\delta$ such that if $0<t<\delta$, then
	\begin{align*}
		\int_{0}^{\delta}\left|d_s\arg\left(z(s)-\zeta(t)\right)\right| 
		& = \int_{0}^{\delta}\frac{d}{ds}\arg\left(z(s)-\zeta(t)\right)ds \\
		&= \underbrace{\arg\left(z(\delta)-\zeta(t)\right)}_{<0}
		-\underbrace{\arg\left(-\zeta(t)\right)}_{>\mu\pi-\pi-\epsilon}<\pi(1-\mu)+\epsilon.
	\end{align*}
	
	In the general case we cannot hope to prove that the derivative in \eqref{derivative-argument}	is positive for \(0<s,t<\delta\), where \(\delta\) is fixed and independent of \(t\). 
	However, we can obtain a local variant of monotonicity which is dependent of the point \(t\). Since
	\[\left|z(s) -s \right|\leq \int_{0}^{s}|z'(t)-1|dt\leq s\omega(s)\]
	we can write
	\begin{equation*}
		z(s) = s+E_1(s),\quad
		z'(s)  = 1+E_2(s),
	\end{equation*}
	where
	\[|E_1(s)|\leq s\omega(s),\quad |E_2(s)|\leq \omega(s)\]
	(the second inequality follows directly from the definition of \(\omega\) since \(z'(0)=1\)).
	Because \(\omega(s)\rightarrow 0\) as \(s\rightarrow 0\) and $z'(0)=x'(0)=1$, there is no restriction 
	in assuming that \(\omega(s)<1\) and \(x'(s)>0\).	We can further assume that \(\omega(s)>0\) for \(s>0\) since otherwise \(z'\) 
	is constant in some neighborhood of \(0\) and we are back in the example discussed at the beginning of the proof.
	
Arguing as in \eqref{derivative-argument}, we find
	\begin{align}
		\begin{split}
			\frac{d}{ds}\arg\left(z(s)-\zeta(t)\right) & = \Im \frac{z'(s)\overline{\bigl(z(s)-\zeta(t)\bigr)}}{|z(s)-\zeta(t)|^2}\\
			& = \Im\frac{\left(1+E_2(s)\right)\left(s+\overline{E_1(s)}-te^{-i\mu\pi}+o(t)\right)}{|z(s)-\zeta(t)|^2} \\
			& \geq \frac{t\sin(\mu\pi)+o(t)-\omega(s)(3s+t)}{|z(s)-\zeta(t)|^2}.
		\end{split}
		\label{eq:angle_derivative}
	\end{align}
	Observe that \(s\mapsto s\sqrt{\omega(s)}\) is strictly increasing from \(0\). 
	This implies that, for \(t>0\) small enough, there is a unique value \(\delta_t\) such that 
	\[\delta_t\sqrt{\omega(\delta_t)} = t.\]
	Since \(\omega(s)\rightarrow 0\) as \(s\rightarrow 0+\), we have 
	\[
	\frac{t}{\delta_t} = \sqrt{\omega(\delta_t)}\rightarrow 0,\quad \text{as}\quad t\rightarrow 0+.
	\]
	Using the inequality \eqref{eq:angle_derivative} we find that if 	\(0<s<\delta_t\) and \(0<t<\delta\) with \(\delta>0\) small enough, then  
	\begin{equation}
		\frac{d}{ds}\arg\left(z(s)-\zeta(t)\right) 
		\geq \frac{t\sin\mu +o(t)-t\omega(\delta_t)-3t\sqrt{\omega(\delta_t)}}{|z(s)-\zeta(t)|^2}>0.
		\label{eq:derivative_pos}
	\end{equation}
	But then \eqref{eq:derivative_pos} yields 
	\begin{align*}
		\int_{0}^{\delta}\left|d_s\arg\left(z(s)-\zeta(t)\right)\right| 
		& = \left(\int_{0}^{\delta_t}+\int_{\delta_t}^{\delta}\right)\left|\frac{d}{ds}\arg\left(z(s)-\zeta(t)\right)\right|ds \\
		& = \int_{0}^{\delta_t}\frac{d}{ds}\arg\left(z(s)-\zeta(t)\right)ds
		+\int_{\delta_t}^{\delta}\left|\frac{d}{ds}\arg\left(z(s)-\zeta(t)\right)\right|ds \\
		& = \arg\left(z(\delta_t)-\zeta(t)\right)-\arg\left(-\zeta(t)\right)
		\\
		&+\int_{\delta_t}^{\delta}\left|\frac{d}{ds}\arg\left(z(s)-\zeta(t)\right)\right|ds.
	\end{align*}
	Since 
	\begin{align*}
		z(\delta_t)-\zeta(t)={}&\delta_t+E_1(\delta_t)-t(e^{i\mu}+o(1))=\delta_t\left(1+O(\omega(\delta_t))+o(t)\right)
	\end{align*}
	as $t\to 0+$, we have that 
	\[
	\arg\left(z(\delta_t)-\zeta(t)\right)\rightarrow 0,\quad \text{as}\quad t\rightarrow 0+.
    \]	
	Since moreover \(\arg(-\zeta(t))\rightarrow \mu\pi   -\pi\) as \(t\rightarrow 0+\), we see that for a given $\epsilon >0$ there exists $\delta>0$ such that if \(0<t<\delta\), then  
	\begin{align}\label{pomm51}
	\int_{0}^{\delta}\left|d_s\arg\left(z(s)-\zeta(t)\right)\right|
	\leq \pi(1-\mu)+\epsilon/3+\int_{\delta_t}^{\delta}\left|\frac{d}{ds}\arg\left(z(s)-\zeta(t)\right)\right|ds.
	\end{align}
	
	What is left is to estimate the last integral. Writing 
	\[
	z(s) = x(s)+iy(s),\quad \zeta(t) = u(t)+iv(t),
	\]
	we obtain, using the triangle inequality, that
	\begin{align}
		\begin{split}
			\left|\frac{d}{ds}\arg\left(z(s)-\zeta(t)\right)\right| 
			&= \frac{\left|-x'(s)y(s)+x'(s)v(t)+y'(s)\left(x(s)-u(t)\right)\right|}{|z(s)-\zeta(t)|^2} \\
			& \leq \frac{|y'(s)|}{|z(s)-\zeta(t)|}+\frac{|y(s)|}{|z(s)-\zeta(t)|^2}+\frac{|x'(s)v(t)|}{(x(s)-u(t))^2},
		\end{split}
		\label{eq:angle_derivative_estimate}
	\end{align}
	where we have used that $|x'(s)|\leq |z'(s)|=1$. 
	Note that %In order to estimate the integral of the right-hand side, note first that 
	\[ 
	\frac{|v(t)|}{x(\delta_t)-u(t)} = \frac{t\sin(\mu\pi)+o(t)}{\delta_t-t\cos(\mu\pi)+o(\delta_t)}\rightarrow
	0+,\qquad t\to 0+,
	\]
	and that if \(0<t<s\), then 
	\begin{align*}
	|z(s)-\zeta(t)| & = |s-te^{i\mu\pi}+o(s)| = |se^{-i\mu\pi}-t+o(s)|\\
	&\geq \left|\Im \left(se^{-i\mu\pi}-t+o(s)\right)\right|=s\sin(\mu\pi)+o(s).
	\end{align*} 
Thus by further restricting the size of \(\delta\) we can be sure that  
\[
0< \frac{|v(t)|}{x(\delta_t)-u(t)}<\frac{\epsilon}{3}, \qquad 0<t<\delta,
\]
\[
|z(s)-\zeta(t)|\geq\frac{s\sin(\mu\pi)}{2},\qquad 0<t<s<\delta,
\]
and that 
\[
\frac{6}{\sin^2 (\mu\pi)}\int_{0}^{\delta}\frac{\omega(s)}{s}ds<\frac{\epsilon}{3}.
\] 
Therefore, by \eqref{eq:angle_derivative_estimate}, \eqref{pomm48} and the fact that $t<\delta_t$, we have 
	\begin{align*}
		\int_{\delta_t}^{\delta}\left|\frac{d}{ds}\arg\left(z(s)-\zeta(t)\right)\right|ds 
		& \leq \int_{\delta_t}^{\delta}\frac{2\omega(s)}{s\sin (\mu\pi)}ds
		+\int_{\delta_t}^{\delta}\frac{4s\omega(s)}{s^2\sin^2(\mu\pi)}ds\\
		&	+|v(t)|\int_{\delta_t}^{\delta}\frac{x'(s)}{(x(s)-u(t))^2}ds.
 	\end{align*}
	The sum of the first two integrals is less than $\epsilon/3$. The final integral can be computed explicitly, giving us
	\begin{align*}
	|v(t)|	\int_{\delta_t}^{\delta}\frac{x'(s)}{(x(s)-u(t))^2}ds 
		&= \left.\frac{-|v(t)|}{x(s)-u(t)}\right\vert_{s = \delta_t}^{\delta}  = \frac{x(\delta)-x(\delta_t)}{x(\delta)-u(t)}\frac{|v(t)|}{x(\delta_t)-u(t)} \\
		&\leq \frac{|v(t)|}{x(\delta_t)-u(t)}<\epsilon/3.
	\end{align*}
After putting these estimates together with \eqref{pomm51} we obtain \eqref{pomm52}.

Suppose now that $\mu=1$. Then $\zeta(t)=u(t)+iv(t)=-t+o(t)$ as $t\to 0+$ and 
\begin{align*}
|z(s)-\zeta(t)|={} &(s+t)(1+o(1)),\qquad s+t\to 0+.
\end{align*}
It follows that for a given $\epsilon>0$, there exists $\delta>0$ such that 
\[
u(t)<0, \qquad \frac{|v(t)|}{-u(t)}< \frac{\epsilon}{2},\qquad 0<t<\delta,
\]
\[
 |z(s)-\zeta(t)|>\frac{s+t}{2},\qquad 0<s,t<\delta,
\]
and
\[
6\int_{0}^{\delta}\frac{\omega(s)}{s}ds< \frac{\epsilon}{2}.
\]
Using \eqref{eq:angle_derivative_estimate}, we see that for all $0<s,t<\delta$,
\begin{align*}
		\left|\frac{d}{ds}\arg\left(z(s)-\zeta(t)\right)\right|  \leq \frac{6\omega(s)}{s}+\frac{|v(t)|x'(s)}{(x(s)-u(t))^2}, 
	\end{align*}
	which after integration yields
 	\begin{align*}
 	\int_{0}^{\delta}\left|\frac{d}{ds}\arg\left(z(s)-\zeta(t)\right)\right|ds 
 	& \leq 6\int_{0}^{\delta}\frac{\omega(s)}{s}ds
 	+\frac{|v(t)|}{-u(t)}\frac{x(\delta)}{x(\delta)-u(t)}<\epsilon. \qedhere
 \end{align*}
\end{proof}

We now turn to proving Theorem \ref{thm:faber_upper_bnd}. We again let 
\(
z_1,\ldots,z_l
\)
denote the corners of the piecewise Dini-smooth curve \(\Gamma\) 
with \(\theta_k\in [0,2\pi)\), \(1\leq k\leq l\), such that
\[
z_k = \psi(e^{i\theta_k}),\qquad 1\leq k\leq l,
\]
and let \(\Theta_\Gamma\) be as in \eqref{eq:def_theta_Gamma}.

\begin{proof}[Proof of Theorem \ref{thm:faber_upper_bnd}]
		Let \(\lambda(\theta)\) be the exterior angle of \(\Gamma\) at \(\psi(e^{i\theta})\), so that
	\[
	\lambda(\theta)=\begin{cases}
		1,& \theta\not\in \Theta_\Gamma,\\
		\lambda_k,& \theta=\theta_k,\ 1\leq k\leq l.
	\end{cases}
	\]
	Observe that
	\begin{align}\label{estimate-for-lambda(theta)}
\lambda(\theta)	\leq \max_{1\leq k\leq l}\Lambda_k,
	\end{align}
	and that in view of \eqref{eq:measure_decomposition}, for all $\theta$ and $0<\delta<\pi$, we have 
	\begin{align}\label{pomm34}
		\frac{1}{\pi}\int_{\theta-\delta}^{\theta+\delta}|dv_\theta(t)|\leq \lambda(\theta)+	\frac{1}{\pi}\int_{(\theta-\delta,\theta+\delta)\setminus \{\theta\}}|dv_\theta(t)|.
	\end{align}
	By Theorem \ref{thm:pommerenke_representation}  (with \(\alpha=\theta-\delta\) and \(0<\delta<\pi\)) and \eqref{eq:measure_decomposition}, 
	we have 
	\begin{align}\label{pomm26}
		\left|F_n(\psi(e^{i\theta}))-\lambda(\theta) e^{in\theta}\right|
		\leq \frac{1}{\pi}\int_{(\theta-\delta,\theta+\delta)\setminus \{\theta\}}|dv_\theta(t)|
		+ \frac{1}{\pi}\left|\int_{\theta+\delta}^{\theta-\delta+2\pi} e^{int}\, dv_\theta(t)\right|.
	\end{align}
By Lemma \ref{lem:dini_arc_local_variation}, for every \(\epsilon>0\) there exists \(\delta_\epsilon>0\) such that for every interval  $(a,b)$ of length $b-a\leq 2\delta_\epsilon$ and $\theta \in (a,b)$, we have 
	\begin{equation}
		\left((a,b)\setminus \{\theta\}\right)\cap\Theta_\Gamma = \emptyset\implies 	\frac{1}{\pi}\int_{(a,b)\setminus \{\theta\}}|dv_\theta(t)|\leq \epsilon.
		\label{eq:no_intersection}
	\end{equation} 

Since for every \(\theta\) it is possible to find \(\delta\in(0,\delta_\epsilon)\) such that 
		\[
		\left((\theta-\delta,\theta+\delta)\setminus \{\theta\}\right)\cap\Theta_\Gamma = \emptyset,
	\]
it follows from \eqref{eq:no_intersection} that
	\begin{align}\label{pomm36}
	\frac{1}{\pi}\int_{(\theta-\delta,\theta+\delta)\setminus \{\theta\}}|dv_\theta(t)|\leq \epsilon.
\end{align}
This inequality together with \eqref{pomm26} 
	and \eqref{eq:pommerenke_riemann_lebesgue} lead to conclude that 
	\begin{align}\label{pomm37}
		\limsup_{n\to\infty}\left|F_n(\psi(e^{i\theta}))- \lambda(\theta) e^{in\theta}\right|\leq \epsilon,
	\end{align}
	proving \eqref{eq:pointwise_faber} in a pointwise sense. 
	Moreover, because for any compact set \(K\subset \Gamma\setminus \{z_1,\ldots,z_l\}\) 
	it is possible to find \(\delta>0\) such that  \(	(\theta-\delta,\theta+\delta)\cap \Theta_\Gamma=\emptyset\) whenever \( \psi(e^{i\theta})\in K\),	it follows that \eqref{pomm34} holds uniformly for all \(\theta\) with \(\psi(e^{i\theta})\in K\).

	The remaining task is to prove \eqref{eq:faber_global_upper_bnd}.  
	We pick \(\delta\in (0,\delta_\epsilon)\) so small that the distance between any two distinct elements of \(\Theta_\Gamma\) 
	is at least \(2\delta\). This has the effect that any interval \((\theta-\delta,\theta+\delta)\) 
	contains at most one point of \(\Theta_\Gamma\). 
	In view of \eqref{eq:no_intersection}, \eqref{pomm34} and \eqref{estimate-for-lambda(theta)} we have 	
\begin{align}\label{pomm35}
 \left((\theta-\delta,\theta+\delta)\setminus\{\theta\}\right)\cap \Theta_\Gamma= \emptyset\implies 	\frac{1}{\pi}\int_{\theta-\delta}^{\theta+\delta}|dv_\theta(t)|\leq \max_{1\leq k\leq l}\Lambda_k+\epsilon.
\end{align}
	Let us assume then that 
	\begin{align*}
 \left((\theta-\delta,\theta+\delta)\setminus\{\theta\}\right)\cap 	\Theta_\Gamma\neq \emptyset,
	\end{align*}
	say \(\theta_k\in (\theta,\theta+\delta)\) for definiteness. Note that in this situation we have $\lambda(\theta)=1$. By Lemma~\ref{lem:variation_upper_bnd}, 
	given \(\epsilon>0\) we have (after possibly shrinking \(\delta>0\)) the upper bound
	\begin{align}\label{pomm28}
		\begin{split}
			\frac{1}{\pi}\int_{\theta_k}^{\theta+\delta}|dv_\theta(t)| &= \frac{1}{\pi}\int_{\theta_k}^{\theta+\delta}\left|d_t\arg\left(\psi(e^{it})-\psi(e^{i\theta})\right)\right|\leq 1-\mu_k+\epsilon,
		\end{split}
	\end{align}
	where $\mu_k\pi $ is the smallest angle of $\Gamma$ at $z_k$. 
	By \eqref{eq:no_intersection} we also have 
	\begin{align}\label{pomm45}
		\frac{1}{\pi}\int_{(\theta-\delta,\theta_k)\setminus\{\theta\}}|dv_\theta(t)|\leq \epsilon.
	\end{align}
Adding \eqref{pomm28} and \eqref{pomm45} results in an inequality that when  coupled with \eqref{pomm34} yields   
	\begin{align}\label{pomm40}
		\frac{1}{\pi}\int_{\theta-\delta}^{\theta+\delta}|dv_\theta(t)|
		\leq 2-\mu_k+2\epsilon\leq \max_{1\leq k\leq l}\Lambda_k+2\epsilon
	\end{align}
	 (since \(\Lambda_k=2-\mu_k\)). 

Summarizing, we have found in \eqref{pomm35} and \eqref{pomm40} that for every value of $\theta$, it is true that 
	\begin{align}\label{pomm41}
	\frac{1}{\pi}\int_{\theta-\delta}^{\theta+\delta}|dv_\theta(t)|
\leq \max_{1\leq k\leq l}\Lambda_k+2\epsilon.
\end{align}
From \eqref{eq:faber_representation} we see that  
	\begin{align*}
		\left|F_n(\psi(e^{i\theta}))\right|
		&\leq \frac{1}{\pi}\int_{\theta-\delta}^{\theta+\delta}|dv_\theta(t)|
		+ \frac{1}{\pi}\left|\int_{\theta+\delta}^{\theta-\delta+2\pi} e^{int}\, dv_\theta(t)\right|,
	\end{align*}
 which in view of \eqref{pomm41} yields    
	\begin{align*}
		\|F_n\|_\Gamma
		= \sup_{\theta\in [0,2\pi)}\left|F_n(\psi(e^{i\theta}))\right|
		\leq \max_{1\leq k\leq l}\Lambda_k+2\epsilon	+ \sup_{\theta\in [0,2\pi)}\frac{1}{\pi}
		\left|\int_{\theta+\delta}^{\theta-\delta+2\pi} e^{int}\, dv_\theta(t)\right|.
	\end{align*}
We now use \eqref{eq:pommerenke_riemann_lebesgue} to conclude that  
	\[
	\limsup_{n\to\infty}\|F_n\|_\Gamma= \max_{1\leq k\leq l}\Lambda_k+2\epsilon,
	\]
	establishing \eqref{eq:faber_global_upper_bnd}.
\end{proof}

\bigskip

\noindent \textbf{Acknowledgment.} 
The research of Olof Rubin and Aron Wennman was supported by 
Odysseus Grant G0DDD23N from 
Research Foundation Flanders (FWO).

	\begin{bibdiv}
		\begin{biblist}
			
			\bib{achieser56}{book}{
				author={Achieser, N.~I.},
				title={Theory of approximation},
				publisher={Frederick Ungar publishing co. New York},
				date={1956},
			}
			
			\bib{andrievskii17}{article}{
				author={Andrievskii, V.~V.},
				title={On {C}hebyshev polynomials in the complex plane},
				date={2017},
				journal={Acta Math. Hungar.},
				volume={152},
				pages={505\ndash 524},
			}
			
			\bib{christiansen-simon-zinchenko-I}{article}{
				author={Christiansen, J.~S.},
				author={Simon, B.},
				author={Zinchenko, M.},
				title={Asymptotics of {C}hebyshev polynomials, {I}. subsets of {$\mathbb{R}$}},
				date={2017},
				journal={Invent. Math.},
				volume={208},
				pages={217\ndash 245},
			}
			
			\bib{christiansen-simon-zinchenko-III}{article}{
				author={Christiansen, J.~S.},
				author={Simon, B.},
				author={Zinchenko, M.},
				title={{Asymptotics of Chebyshev polynomials, III. Sets saturating Szeg{\H{o}}, Schiefermayr, and Totik--Widom bounds}},
				date={2020},
				journal={Oper. Theory Adv. Appl.},
				volume={276},
				pages={231\ndash 246},
			}
			
			\bib{christiansen-simon-zinchenko-IV}{article}{
				author={Christiansen, J.~S.},
				author={Simon, B.},
				author={Zinchenko, M.},
				title={{Asymptotics of Chebyshev polynomials, IV. Comments on the complex case}},
				date={2020},
				journal={J. Anal. Math.},
				volume={141},
				pages={207\ndash 223},
			}
			
			\bib{eichinger17}{article}{
				author={Eichinger, B.},
				title={Szeg{\H{o}}-{W}idom asymptotics of {C}hebyshev polynomials on circular arcs},
				date={2017},
				journal={J. Approx. Theory},
				volume={217},
				pages={15\ndash 25},
			}
			
			\bib{faber03}{article}{
				author={Faber, G.},
				title={{\"{U}ber polynomische Entwicklungen}},
				date={1903},
				journal={Math. Ann.},
				volume={57},
				pages={389\ndash 408},
			}
			
			\bib{faber20}{article}{
				author={Faber, G.},
				title={{\"{U}ber {T}schebyscheffsche Polynome}},
				date={1920},
				journal={J. Reine Angew. Math.},
				volume={150},
				pages={79\ndash 106},
			}
			
			\bib{fekete23}{article}{
				author={Fekete, M.},
				title={{\"{U}ber die Verteilung der Wurzeln bei gewissen algebraischen Gleichungen mit ganzzahligen Koeffizienten}},
				date={1923},
				journal={Math. Z.},
				volume={17},
				pages={228\ndash 249},
			}
			
			\bib{folland99}{book}{
				author={Folland, G.~B.},
				title={Real Analysis},
				edition={Second},
				series={Pure and Applied Mathematics (New York)},
				publisher={John Wiley \& Sons, Inc., New York},
				date={1999},
				note={Modern techniques and their applications, A Wiley-Interscience Publication},
			}
			
			\bib{gaier99}{article}{
				author={Gaier, D.},
				title={The {F}aber operator and its boundedness},
				date={1999},
				journal={J. Approx. Theory},
				volume={101},
				pages={265\ndash 277},
			}
			
			\bib{goncharov-hatinoglu15}{article}{
				author={Goncharov, A.},
				author={Hatino{\u{g}}lu, B.},
				title={Widom factors},
				date={2015},
				journal={Potential Anal.},
				volume={42},
				pages={671\ndash 680},
			}
			
			\bib{hubner-rubin25}{article}{
				author={H\"{u}bner, L.~A.},
				author={Rubin, O.},
				title={Computing {C}hebyshev polynomials using the complex {R}emez algorithm},
				date={2025},
				journal={Exp. Math.},
				pages={1\ndash 31},
			}

			\bib{minadiaz09}{article}{
			author={{Mi{\~{n}}a--D\'{i}az}, E.},
			title={On the asymptotic behavior of {F}aber polynomials for domains with piecewise analytic boundary},
			date={2009},
			journal={Constr. Approx.},
			volume={29},
			pages={421\ndash 448},
		}

			\bib{novello-schiefermayr-zinchenko21}{incollection}{
				author={Novello, G.},
				author={Schiefermayr, K.},
				author={Zinchenko, M.},
				title={Weighted {C}hebyshev polynomials on compact subsets of the complex plane},
				date={2021},
				booktitle={From operator theory to orthogonal polynomials, combinatorics, and number theory},
				publisher={Springer},
				pages={357\ndash 370},
			}
			
			\bib{pommerenke64}{article}{
				author={Pommerenke, Ch.},
				title={{\"U}ber die {F}aberschen {P}olynome schlichter {F}unktionen},
				date={1964},
				journal={Math. Z.},
				volume={85},
				pages={197\ndash 208},
			}
			
			\bib{pommerenke65}{article}{
				author={Pommerenke, Ch.},
				title={Konforme {A}bbildung und {F}ekete-{P}unkte},
				date={1965},
				journal={Math. Z.},
				volume={89},
				pages={422\ndash 438},
			}
			
			\bib{pommerenke92}{book}{
				author={Pommerenke, Ch.},
				title={Boundary Behaviour of Conformal Maps},
				publisher={Springer-Verlag Berlin Heidelberg},
				date={1992},
				series={Grundlehren der mathematischen Wissenschaften 299},
			}

			\bib{pritsker99}{article}{
				author={Pritsker, I.~E.},
				title={On the local asymptotics of {F}aber polynomials},
				date={1999},
				journal={Proc. Amer. Math. Soc.},
				volume={127},
				number={10},
				pages={2953\ndash 2960},
			}
			
			\bib{pritsker02}{article}{
				author={Pritsker, I.~E.},
				title={Derivatives of {F}aber polynomials and {M}arkov inequalities},
				date={2002},
				journal={J. Approx. Theory},
				volume={118},
				pages={163\ndash 174},
			}
			
				\bib{safftotik}{book}{
				author={Saff, E.~B.},
				author={Totik, V.},
			    series={A Series of Comprehensive Studies in Mathematics 316},
				title={Logarithmic Potentials with External Fields},
				publisher={Springer-Verlag Berlin Heidelberg New York},
				date={1997},
			}
			
			\bib{smirnov-lebedev68}{book}{
			author={Smirnov, V.~I.},
			author={Lebedev, N.~A.},
			title={Functions of a complex variable: constructive theory},
			publisher={The M.I.T. press, Massachusetts Institute of Technology, Cambridge, Massachusetts},
			date={1968},
		}
			
			\bib{suetin64}{article}{
				author={Suetin, P.~K.},
				title={Fundamental properties of {F}aber polynomials},
				date={1964},
				journal={Russ. Math. Surv.},
				volume={19},
				pages={121\ndash 149},
				note={Originally published in Russian: Uspehi Mat. Nauk {\bf{19}} (1964) no. 4 (118)},
			}
			
			\bib{suetin84}{book}{
				author={Suetin, P.~K.},
				title={Series of {F}aber polynomials},
				publisher={Gordon and Breach science publishers, Amsterdam, The Netherlands},
				date={1998},
			}
			
			\bib{szego24}{article}{
				author={Szeg\H{o}, G.},
				title={{Bemerkungen zu einer Arbeit von Herrn M. Fekete: \"{U}ber die Verteilung der Wurzeln bei gewissen algebraischen Gleichungen mit ganzzahligen Koeffizienten}},
				date={1924},
				journal={Math. Z.},
				volume={21},
				pages={203\ndash 208},
			}
			
			\bib{tang88}{article}{
				author={Tang, P.~T.~P.},
				title={A fast algorithm for linear complex {C}hebyshev approximation},
				date={1988},
				journal={Math. Comp.},
				volume={51},
				pages={721\ndash 739},
			}
			
			\bib{totik14}{article}{
				author={Totik, V.},
				title={Chebyshev polynomials on compact sets},
				date={2014},
				journal={Potential Anal.},
				volume={40},
				pages={511\ndash 524},
			}
			
			\bib{totik-varga15}{article}{
				author={Totik, V.},
				author={Varga, T.},
				title={Chebyshev and fast decreasing polynomials},
				date={2015},
				journal={Proc. Lond. Math. Soc.},
				volume={110},
				pages={1057\ndash 1098},
			}
			
			\bib{totik-yuditskii15}{article}{
				author={Totik, V.},
				author={Yuditskii, P.},
				title={On a conjecture of {W}idom},
				date={2015},
				journal={J. Approx. Theory},
				volume={190},
				pages={50\ndash 61},
			}
			
			\bib{widom69}{article}{
				author={Widom, H.},
				title={Extremal polynomials associated with a system of curves in the complex plane},
				date={1969},
				journal={Adv. Math.},
				volume={3},
				pages={127\ndash 232},
			}
			
		\end{biblist}
	\end{bibdiv}

\bigskip
\bigskip

\noindent {Erwin Mi\~{n}a-D\'{\i}az}\\ 
Department of Mathematics, \\
The University of Mississippi, \\
Hume Hall 305, P. O. Box 1848, \\
MS 38677-1848, USA
\newline {\tt minadiaz@olemiss.edu}

\bigskip

\noindent {Olof Rubin} \\ Department of Mathematics, \\
KU Leuven, \\
Celestijnenlaan 200B, \\ 
3000 Leuven, Belgium
\newline {\tt olof.rubin@kuleuven.be}

\bigskip

\noindent {Aron Wennman} \\ Department of Mathematics, \\
KU Leuven, \\
Celestijnenlaan 200B, \\ 
3000 Leuven, Belgium
\newline {\tt aron.wennman@kuleuven.be}
\end{document}